\newtheorem{theorem}{Theorem}[section]
\newtheorem{lemma}[theorem]{Lemma}
\newtheorem{corollary}[theorem]{Corollary}
\newtheorem{proposition}[theorem]{Proposition}
\theoremstyle{definition}
\newtheorem{remark}[theorem]{Remark}
\newtheorem{definition}[theorem]{Definition}
\numberwithin{equation}{section}
\newcommand{\open}[1]{\smallskip\noindent\fbox{\parbox{\textwidth}{\color{blue}\bfseries\begin{center}
      #1 \end{center}}}\\ \smallskip}
\newcommand{\sdm}[1]{{\color{blue} #1}}
\renewcommand{\sdm}[1]{{#1}}
\newcommand{\ignore}[1]{}
\newcommand{\bli}{\begin{list}{}{\labelwidth6mm\leftmargin8mm}}
\newcommand{\eli}{\end{list}}
\newcommand{\dint}{\;\mathrm{d}}
\newcommand{\Dd}{\;\mathrm{D}}
\newcommand{\whole}[1]{\ensuremath\left\lfloor #1 \right\rfloor}
\newcommand{\fz}{\infty}
\def\ls{\lesssim}
\def\vz{\varphi}
\def\supp{{\mathop\mathrm{supp\,}\nolimits}}
\newcommand{\bt}{{B}_{p,q}^{s,\tau}}
\newcommand{\ft}{{F}_{p,q}^{s,\tau}}
\newcommand{\MA}{\ensuremath{{\cal A}^{s}_{u,p,q}}}
\newcommand{\MB}{\ensuremath{{\cal N}^{s}_{u,p,q}}}
\newcommand{\MF}{\ensuremath{{\cal E}^{s}_{u,p,q}}}
\newcommand{\M}{\ensuremath{{\cal M}_{u,p}}}
\newcommand{\cM}{\ensuremath{{\mathcal M}}}
\newcommand{\SRd}{\mathcal{S}(\rd)}
\newcommand{\SpRd}{\mathcal{S}'(\rd)}
\newcommand{\bmo}{\mathop{\mathrm{bmo}}}
\newcommand{\Lloc}{L_1^{\mathrm{loc}}}
\newcommand{\A}{\ensuremath{A^s_{p,q}}}
\newcommand{\beq}{\begin{equation}}
\newcommand{\eeq}{\end{equation}}
\newcommand{\ve}{\varepsilon}
\newcommand{\real}{\ensuremath{{\mathbb R}}}
\newcommand{\rr}{{\real}}
\newcommand{\rd}{\ensuremath{{\real^d}}}
\newcommand{\cc}{\ensuremath{{\mathbb C}}}
\newcommand{\zz}{\ensuremath{{\mathbb Z}}}
\newcommand{\zd}{\ensuremath{{\zz^d}}}
\newcommand{\nn}{\ensuremath{{\mathbb N}}}
\newcommand{\no}{\ensuremath{{\mathbb N}_0}}
\newcommand{\cs}{\ensuremath{\mathcal S}}
\newcommand{\MBd}{\ensuremath{{\cal N}^{\frac{d}{u}}_{u,p,q}}}
\newcommand{\MFd}{\ensuremath{{\cal E}^{\frac{d}{u}}_{u,p,q}}}
\newcommand{\cN}{\ensuremath{{\cal N}}}
\newcommand{\cE}{\ensuremath{{\cal E}}}
\begin{document}

\title{Some embeddings of Morrey spaces with critical smoothness 
}

\author{Dorothee D. Haroske\footnotemark[1],
Susana D. Moura\footnotemark[3],   and Leszek Skrzypczak\footnotemark[1] \footnotemark[2]}
\maketitle

\footnotetext[1]{The first and third author were partially supported by the German Research Foundation (DFG), Grant no. Ha 2794/8-1.}
\footnotetext[2]{The author was partially supported by National Science Center, Poland,  Grant No.
  2013/10/A/ST1/00091.}
 \footnotetext[3]{The author was partially supported by the Centre for Mathematics of the University of Coimbra -- UID/MAT/00324/2019, funded by the Portuguese Government through FCT/MEC and co-funded by the European Regional Development Fund through the Partnership Agreement PT2020}


\begin{abstract} 
\sdm{We study embeddings of Besov-Morrey spaces  $\MB(\rd)$ and of Triebel-Lizorkin-Morrey spaces $\MF(\rd)$  in the limiting cases when the smoothness  $s$ equals  $s_0=d\max(1/u-p/u,0)$ or $s_{\infty}=d/u$, which is related to the embeddings in $\Lloc(\rd)$ or in $L_{\infty}(\rd)$, respectively.  When $s=s_0$ we characterise the embeddings in $\Lloc(\rd)$ and when  $s=s_{\infty}$ we obtain embeddings into Orlicz-Morrey spaces of exponential type and into generalised Morrey spaces. }   
\end{abstract}

\medskip

{\bfseries MSC (2010)}: 46E35 \smallskip

{\bfseries Key words}: Besov-Morrey spaces, Triebel-Lizorkin-Morrey spaces, limiting embeddings,  generalised Morrey spaces, Orlicz-Morrey spaces

\section{Introduction} 

In recent years  function spaces built upon Morrey spaces $\M(\rd)$, $0 < p \le u < \infty$, attracted some attention.
They  include in particular Besov-Morrey spaces $\MB(\rd)$ and  Triebel-Lizorkin-Morrey spaces $\MF(\rd)$, $0 < p \le u < \infty$, $0 < q \le \infty$, $s \in \real$. 
The attention paid to the spaces was motivated first of all by possible applications.  
The Besov-Morrey spaces $\MB(\rd)$ were introduced by Kozono and Yamazaki in \cite{KY} and used by them and later on by Mazzucato \cite{Maz} in the study of Navier-Stokes equations. In \cite{TX} Tang and Xu introduced the corresponding Triebel-Lizorkin-Morrey spaces $\MF(\rd)$, thanks to establishing the Morrey version of the Fefferman-Stein vector-valued inequality. Some properties of these spaces including their atomic decompositions and wavelet characterisations were later described in the papers by Sawano \cite{Saw2,Saw1}, Sawano and Tanaka \cite{ST2,ST1} and Rosenthal \cite{MR-1}. 
Also embedding properties of these spaces were investigated in a series of papers \cite{hs12,hs13}.

In our two recent papers \cite{HaSM3} and  \cite{hms} we  studied  under which conditions  the spaces 
 $\MB(\rd)$ and  $\MF(\rd)$ contain only regular distributions, i.e.,  when
\begin{equation}\label{int0}
\MB(\rd)\subset\Lloc(\rd), \quad \MF(\rd)\subset\Lloc(\rd)
\end{equation}
and when they  consist  of bounded functions
\begin{equation}\label{int1}
\MB(\rd)\subset L_\infty(\rd), \quad \MF(\rd)\subset L_\infty(\rd).
\end{equation}
The  embeddings \eqref{int0} hold if the smoothness $s$ is  related to  $s_o=\frac{p}{u}d \max(\frac{1}{p}-1,0)$. Similarly the embeddings \eqref{int1} are valid if $s$ is  related to  $s_\infty=\frac{d}{u}$. Therefore we called $s_o$ and $s_\infty$ critical smoothnesses. In particular,    the spaces contain a non-regular  distribution if $s<s_o$ and  consist of locally integrable functions if  $s>s_o$. The behaviour in the critical case $s=s_o$  is a delicate question. The analogous situation occurs in the case of boundedness of the functions.

%
%

This paper is the direct continuation of the above mentioned recent  articles. First in Section 3 we prove that the embeddings \eqref{int0} with smoothness $s=s_o$ hold if and only if the spaces $\MB(\rd)$ and $\MF(\rd)$ are embedded into some Morrey spaces $\mathcal{M}_{v,r}(\rd)$ with properly defined indices  $v$ and $r$, cf. Theorem \ref{theorem-1-e} and  Theorem \ref{theorem-n_290319}. 

In Section 4 we  investigate unboundedness properties of the functions belonging to  smoothness spaces of Morrey type with $s=s_\infty$. First we prove that the spaces $\mathcal{N}^{d/u}_{u,p,q}(\rd)$   and $\mathcal{E}^{d/u}_{u,p,q}(\rd)$ can be embedded into Orlicz-Morrey spaces of exponential type, cf. Theorem 4.5 and Corollary 4.6. The idea that  exponential Young functions can control  the unboundedness of functions belonging to the Sobolev spaces of critical smoothness goes back to Trudinger \cite{tru}, cf. also Strichartz \cite{Str}. There are several definitions of Orlicz-Morrey spaces. Here we follow the approach proposed by Nakai \cite{nakai}. Afterwards we embed the above spaces of smoothness $\frac{d}{u}$ into the so-called generalised Morrey spaces, cf. Theorem \ref{theorem-MB} and Remark \ref{rmk-MB}. The generalised Morrey spaces have been investigated by several authors.  Our  results here are    close to that ones in papers by Sawano and Wadade \cite{sw}, Nakamura, Noi and Sawano \cite{nns} and  by Eridani, Gunawan, Nakai  and Sawano \cite{EGNS14}.  
However in contrast to the above papers we study also the Besov-Morrey spaces. Moreover in the case of Sobolev-Morrey spaces we are able to prove the embeddings for a larger set of parameters.   

In the very short last section, somewhat supplementary,   we formulate  some outcome concerning  embeddings into spaces with smoothness $0$, in particular to $\bmo (\rd)$. In Section 2 we collect the definition and basic facts concerning the smoothness Morrey spaces that are needed in the next sections.

\section{Smoothness spaces of Morrey type} \label{sect-2}

First we fix some notation. By $\nn$ we denote the \emph{set of natural numbers},
by $\nn_0$ the set $\nn \cup \{0\}$,  and by $\zz^d$ the \emph{set of all lattice points
in $\rd$ having integer components}. Let $\nn_0^d$, where $d\in\nn$, be the set of all multi-indices, $\alpha = (\alpha_1, \ldots,\alpha_d)$ with $\alpha_j\in\nn_0$ and $|\alpha| := \sum_{j=1}^d \alpha_j$. If $x=(x_1,\ldots,x_d)\in\rd$ and $\alpha = (\alpha_1, \ldots,\alpha_d)\in\nn_0^d$, then we put $x^\alpha := x_1^{\alpha_1} \cdots x_d^{\alpha_d}$. 
For $a\in\real$, let   $\whole{a}:=\max\{k\in\zz: k\leq a\}$ and $a_+:=\max(a,0)$.
If $0<u\leq \infty$, the number $u'$ is given by $\frac{1}{u'}=(1-\frac1u)_+$, with the convention that $1/\infty=0$. 
All unimportant positive constants will be denoted by $C$, 
occasionally the same letter $C$ is used to denote different constants  in the same chain of inequalities.
 By the notation $A \ls B$, we mean that there exists a positive constant $c$ such that
 $A \le c \,B$, whereas  the symbol $A \sim B$ stands for $A \ls B \ls A$.
We denote by $B(x,r) :=  \{y\in \rd: |x-y|<r\}$ the ball centred at $x\in\rd$ with radius $r>0$ and $|\cdot|$ denotes the Lebesgue measure when applied to measurable subsets of $\rd$.
Let $\mathcal{Q}$ be the collection of all \emph{dyadic cubes} in $\rd$, namely,
$\mathcal{Q}:= \{Q_{j,k}:= 2^{-j}([0,1)^d+k):\ j\in\zz,\ k\in\zz^d\}.$

Given two (quasi-)Banach spaces $X$ and $Y$, we write $X\hookrightarrow Y$
if $X\subset Y$ and the natural embedding of $X$ into $Y$ is continuous.

\medskip

We introduce smoothness  spaces of Morrey type. We recall first  the definition of Morrey spaces.

\begin{definition}
Let $0 <p\leq  u<\infty$. The \emph{Morrey space}
  $\M(\rd)$  is  the set of all
  locally $p$-integrable functions $f\in L_p^{\mathrm{loc}}(\rd)$  such that
\begin{equation} \label{Morrey-norm}
\|f \mid {\M(\rd)}\| :=\, \sup_{Q\in\mathcal{Q}} |Q|^{\frac{1}{u}-\frac{1}{p}}
\biggl(\int_{Q} |f(y)|^p \dint y \biggr)^{1/p}\, <\, \infty\, .
\end{equation}
\end{definition}

\begin{remark}
The spaces $\M(\rd)$ are quasi-Banach spaces (Banach spaces for $p \ge 1$){\color{blue}}, that can be equivalently defined if the supremum in \eqref{Morrey-norm} is taken over all balls $B(x,r)$ or over all 
cubes $Q(x,r)$, with center $x\in\rd$ and side length $r>0$, and sides parallel to the axis of coordinates. \\
These spaces originated from Morrey's study on PDE (see \cite{Mor}) and are part of the wider class of Morrey-Campanato spaces; cf. \cite{Pee}. They can be considered as a complement to $L_p$ spaces, since $\cM_{p,p}(\rd) = L_p(\rd)$ with $p\in(0,\infty)$, extended by $\cM_{\infty,\infty}(\rd)  = L_\infty(\rd)$. In a parallel way one can define the spaces $\cM_{\infty,p}(\rd)$, $p\in(0, \infty)$, but using the Lebesgue differentiation theorem, one arrives at $\cM_{\infty, p}(\rd) = L_\infty(\rd)$. Moreover, $\M(\rd)=\{0\}$ for $u<p$, and for  $0<p_2 \le p_1 \le u < \infty$,
\begin{equation*} 
	L_u(\rd)= \cM_{u,u}(\rd) \hookrightarrow  \cM_{u,p_1}(\rd)\hookrightarrow  \cM_{u,p_2}(\rd).
\end{equation*}
\end{remark}

\medskip
Let $\SRd$ be the set of all \emph{Schwartz functions} on $\rd$, endowed
with the usual topology,
and denote by $\SpRd$ its \emph{topological dual}, namely,
the space of all bounded linear functionals on $\SRd$
endowed with the weak $\ast$-topology. 
Let $\vz_0=\vz\in\SRd$ be such that
\begin{equation}\label{e1}
\supp{\vz}\subset \{x\in\rd:\,|x|\le2\}\, \qquad\text{and}\qquad
{\vz}(x)=1 \quad  \text{if}\quad  |x|\leq 1,
\end{equation}
and for each $j\in \nn$ let  $\vz_j(x):=\vz(2^{-j} x)-\varphi(2^{-j+1} x)$. Then $\{\varphi_j\}_{j=0}^{\infty}$ forms a smooth dyadic resolution of unity.


\begin{definition}
Let $0 <p\leq  u<\infty$ or $p=u=\infty$. Let  $q\in(0,\infty]$, $s\in \real$ and $\{\varphi_j\}_{j=0}^{\infty}$  a smooth dyadic resolution of unity.
\bli
\item[{\upshape\bfseries (i)}]
The  {\em Besov-Morrey   space}
  $\MB(\rd)$ is defined to be the set of all distributions $f\in \SpRd$ such that
\begin{align*}
\big\|f\mid \MB(\rd)\big\|:=
\bigg(\sum_{j=0}^{\infty}2^{jsq}\big\|{\mathcal F}^{-1} (\varphi_j {\mathcal F}f)\vert 
\M(\rd)\big\|^q \bigg)^{1/q} < \infty
\end{align*}
with the usual modification made in case of $q=\fz$.
\item[{\upshape\bfseries  (ii)}]
Let $u\in(0,\fz)$. The  {\em Triebel-Lizorkin-Morrey  space} $\MF(\rd)$
is defined to be the set of all distributions $f\in   \SpRd$ such that
\begin{align*}
\big\|f \mid \MF(\rd)\big\|:=\bigg\|\bigg(\sum_{j=0}^{\infty}2^{jsq} |
 {\mathcal F}^{-1} (\varphi_j {\mathcal F}f)(\cdot)|^q\bigg)^{1/q}
\mid \M(\rd)\bigg\| <\infty
\end{align*}
with the usual modification made in case of  $q=\fz$.
\eli
\end{definition}

\begin{remark}  Occasionally we adopt the usual custom  to  write $\MA(\rd)$ instead of   $\MB(\rd)$  or  $\MF(\rd)$, when both scales are meant simultaneously in some context. 
The  spaces $\MA(\rd)$  are independent of the particular dyadic partition of unity $\{\vz_j\}_{j=0}^{\infty}$ appearing in their definitions.
They are quasi-Banach spaces (Banach spaces for $p,\,q\geq 1$), and $\mathcal{S}(\rd) \hookrightarrow
\MA(\rd) \hookrightarrow \mathcal{S}'(\rd)$.  Moreover, for $u=p$  we re-obtain the usual  Besov and Triebel-Lizorkin spaces ${\cal A}^{s}_{p,p,q}(\rd) = \A(\rd)$. 
Besov-Morrey spaces were introduced by Kozono and Yamazaki in
\cite{KY}. They studied semi-linear heat equations and Navier-Stokes
equations with initial data belonging to  Besov-Morrey spaces.  The
investigations were continued by Mazzucato \cite{Maz}, where one can find the
atomic decomposition of some spaces.
The Triebel-Lizorkin-Morrey spaces
were later introduced by  Tang and Xu \cite{TX}, we follow their approach. 
The ideas were further developed by Sawano and Tanaka \cite{ST1,ST2,Saw1,Saw2}. Closely related, alternative approaches can  be found in the monographs \cite{ysy,t13,t14} or in the survey papers by Sickel \cite{s011,s011a}.
\end{remark}

We list some  elementary embeddings within this scale of spaces.  It holds
\begin{equation*} 
{\mathcal A}^{s+\varepsilon}_{u,p,r}(\rd)  \hookrightarrow
\MA(\rd), \quad\text{if} \quad \ve>0, \quad r,q\in(0,\infty],
\end{equation*}
and 
\begin{equation} \label{elem-1-t}
{\cal A}^{s}_{u,p,q_1}(\rd)  \hookrightarrow {\cal A}^{s}_{u,p,q_2}(\rd),\quad  
\quad\text{if} \quad q_1\le q_2.
\end{equation}
Sawano proved in \cite{Saw2} that, for $s\in\real$ and $0<p< u<\infty$,
\begin{equation}\label{elem}
	{\cal N}^s_{u,p,\min(p,q)}(\rd)\, \hookrightarrow \, \MF(\rd)\, \hookrightarrow \,{\cal N}^s_{u,p,\infty}(\rd),
\end{equation}
where, for the latter embedding, $r=\infty$ cannot be improved -- unlike
in case of $u=p$. More precisely,
$\MF(\rd)\hookrightarrow {\mathcal N}^s_{u,p,r}(\rd)$ {if, and only if,} $r=\infty$ {or} $u=p$ {and} $r\ge \max(p,q)$.
On the other hand, Mazzucato has shown in \cite[Prop.~4.1]{Maz} that 
\begin{equation}\label{mazzu}
{\mathcal E}^0_{u,p,2}(\rd)=\M(\rd), \quad 1<p\leq u<\infty,
\end{equation}
in particular,
$$
{\mathcal E}^0_{p,p,2}(\rd)=L_p(\rd), \quad 1<p<\infty.
$$
This is nothing else than the well-known classical coincidence $F^0_{p,2}(\rd)=L_p(\rd)$, $1<p<\infty$, cf. \cite[Thm.~2.5.6]{T-F1}.
Further embedding results for the above scales of function spaces on $\rd$ can be found in \cite{hs12,hs13,YHMSY,hms}.

\subsubsection*{The  atomic decompositions } 

An important tool in our later considerations is the
characterisation of the  Besov-Morrey and Triebel-Lizorkin-Morrey spaces  by means
of atomic decompositions. We follow \cite{MR-1}; see also \cite{ST2}. 

\begin{definition}\label{defatoms}  
Let $0<p\leq u< \infty$, $q\in(0,\infty]$ and $s\in \rr$. Let $K\in \no$ and $N\in \{-1\}\cup \no$. 
A collection of  $L_{\infty}$-functions $a_{jm}:\rd\to \cc$,  $j\in\no$, $m\in\zz^d$,  is a family of  $(K,N)$-atoms 
if, for some $c_1>1$ and $c_2>0$, it holds
\bli
\item[{\upshape\bfseries (i)}]
 $\supp a_{jm} \subset c_1 Q_{jm}, \quad j\in\no,\; m\in\zz^d$,
\item[{\upshape\bfseries (ii)}] there exist all (classical derivatives) $\Dd^{\alpha}a_{jm}$ for $\alpha\in\no^n$ with $ |\alpha|\leq K$ 
and
\begin{equation} \label{cond-atoms}
\|\Dd^{\alpha}a_{jm} | L_{\infty}(\rd)\|\leq c_2 2^{j |\alpha|}, \quad j\in\no,\; m\in\zz^d,
\end{equation}
\item[{\upshape\bfseries (iii)}] if $\gamma\in\nn_0^d$ with $|\gamma|\leq N$, then 
\begin{equation*} 
\int_{\rd}x^{\gamma}a_{jm}(x) \dint x=0, \quad  j\in\no,\;\; m\in\zz^d.
\end{equation*}
$N=-1$ means that no moment condition is required.  
\eli
\end{definition}

\sdm{For $0<u<\infty$, $j\in\no$ and $m\in\zz^d$ we denote by $\chi_{j,m}$ the characteristic function of the cube $Q_{j,m}$ and by  
$\chi_{j,m}^{(u)}:=2^{jd/u}\chi_{j,m}$ the $u$-normalised  characteristic function of the same cube, i.e.,  such that $\|\chi_{j,m}^{(u)}\mid\M(\rd) \|=1$.}

\begin{definition}\label{dts}
Let $0<p\leq u<\infty$, $q\in(0,\fz]$ and  $s\in \rr$.
\bli
\item[{\upshape\bfseries (i)}]
The \emph{sequence space}
$e^s_{u,p,q}(\rd)$ is defined to be the set
of all sequences $\lambda:=\{\lambda_{j,m}\}_{j\in\no,m\in\zz^d}\subset \cc$ such that
$$\|\lambda \mid e^s_{u,p,q}(\rd)\|:=
\Big\| \big(\sum_{j=0}^\infty  2^{jq(s-\frac{d}{u})} 
\sum_{m\in\zz^d} |\lambda_{j,m}|^q \chi^{(u)q}_{j, m}\big)^{1/q}|
\M(\rd)\Big\| <\infty$$
with the usual modification in case of $q=\fz$.
\item[{\upshape\bfseries (ii)}]
The \emph{sequence space}
$n^s_{u,p,q}(\rd)$ is defined to be the set
of all sequences $\lambda:=\{\lambda_{j,m}\}_{j\in\no,m\in\zz^d}\subset \cc$ such that
$$\|\lambda \mid n^s_{u,p,q}(\rd)\|:=
 \Big(\sum_{j=0}^\infty  2^{jq(s-\frac{d}{u})} \big\|
\sum_{m\in\zz^d} |\lambda_{j,m}|\chi^{(u)}_{j, m}\,|\M(\rd)\big\|^q\Big)^{1/q} <\infty$$
with the usual modification in case of $q=\fz$.
\eli
\end{definition}

\begin{remark} \label{rmk-sequencenorm}
It was proved in \cite{hs12} that 
\begin{align*}
\|\lambda \mid n^s_{u,p,q} & (\rd)\| \sim \|\lambda| {  n}^{s}_{u,p,q}\|^\ast : = \\
= &\Bigg(\sum_{j=0}^\infty 2^{qj(s-\frac d u)}\!
\sup_{\nu: \nu \le j; k\in \mathbb{Z}^d}\! 2^{qd(j-\nu)(\frac 1 u - \frac 1 p )}\Big(\sum_{m:Q_{j,m}\subset Q_{\nu,k}}\!\!|\lambda_{j,m}|^p\Big)^{\frac q p}\Bigg)^{1/q} < \infty . \nonumber
\end{align*}
\end{remark}

\smallskip

For $p,q\in(0, \infty]$, let 
\begin{equation}\label{sigma_p}
\sigma_p := d\left(\frac{1}{p} - 1\right)_{+} \quad \mbox{and} \quad  \sigma_{p,q} := d\left(\frac{1}{\min(p,q) }- 1\right)_{+} .
\end{equation}
According to \cite[Thms. 2.30 and 2.36]{MR-1} (see also \cite[Thm.~4.12]{ST2}), we have the  following 
atomic decomposition characterisation of $\MA(\rd)$, where we adopt the same custom to write $a^s_{u,p,q}(\rd)$ instead of $n^s_{u,p,q}(\rd)$ or $e^s_{u,p,q}(\rd)$, for convenience.

\begin{proposition}\label{atomicdecomposition}
Let $0<p\leq u<\infty$, $q\in(0,\fz]$ and  $s\in \rr$. Let

 $$
    K \ge \max(\whole{s+1},0)\\
$$
and  
$$
 N \ge \max(\whole{\sigma_{p,q}-s},-1) \; ({\cal E}- case)\quad \text{or}\quad   N\ge \max(\whole{\sigma_{p}-s},-1) \; ({\cal N}- case).  
$$

Then for each $f\in \MA(\rd)$, there exist a family $\{a_{jm}\}_{j\in\no,m\in\zz^d}$ of $(K,N)$- atoms 
and  a sequence  $\lambda=\{\lambda_{jm}\}_{j\in\no,m\in\zz^d}\in a^s_{u,p,q}(\rd)$ such that 
$$
f=\sum_{j=0}^{\infty}\sum_{m\in\zz^d}\lambda_{jm} \,a_{jm} \quad \mbox{in} \quad \cs'(\rd)
$$
and 
$$
\|\lambda \mid a^s_{u,p,q}(\rd)\|\leq C\, \|f \mid  \MA(\rd)\|,
$$
where $C$ is a positive constant independent of $\lambda$ and $f$.

Conversely, there exists a positive constant $C$ such that for all families $\{a_{jm}\}_{j\in\no,m\in\zz^d}$ of $(K,N)$-atoms 
 and $\lambda=\{\lambda_{jm}\}_{j\in\no,m\in\zz^d}\in a^s_{u,p,q}(\rd)$,
$$
 \Big\|\sum_{j=0}^{\infty}\sum_{m\in\zz^d}\lambda_{jm} \,a_{jm} \mid  \MA(\rd)\Big\|\leq C\,  \|\lambda \mid a^s_{u,p,q}(\rd)\|.
$$
\end{proposition}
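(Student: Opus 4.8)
The plan is to establish the two inequalities separately, following the $\varphi$-transform philosophy of Frazier and Jawerth adapted to the Morrey setting; the statement is in fact already contained in the cited works \cite{MR-1,ST2}, and I would reconstruct it along the following lines. Throughout, the essential analytic input is the Morrey-space version of the Fefferman--Stein vector-valued maximal inequality established by Tang and Xu in \cite{TX}, which replaces the classical tool available in the diagonal case $u=p$.

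For the reconstruction (synthesis) direction I would start from a family $\{a_{jm}\}$ of $(K,N)$-atoms and a sequence $\lambda\in a^s_{u,p,q}(\rd)$, and apply the convolution operators $\mathcal F^{-1}(\vz_k\mathcal F\,\cdot\,)$ to a single atom $a_{jm}$. Using the $K$ derivative bounds in \eqref{cond-atoms} together with the $N$ vanishing moments, a Taylor-expansion and integration-by-parts argument yields the key pointwise kernel estimate
\begin{equation*}
\big|\mathcal F^{-1}(\vz_k\mathcal F a_{jm})(x)\big| \ls 2^{-|k-j|M}\,\big(1+2^{\min(k,j)}|x-x_{jm}|\big)^{-L},
\end{equation*}
valid for suitable exponents $M$ and $L$ controlled by $K$ and $N$ respectively, where $x_{jm}$ denotes the centre of $Q_{jm}$. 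Summing over $m$ and then over $j$, the spatial-decay factor converts the sum into a maximal-type expression, and one passes to the Morrey norm by invoking the vector-valued maximal inequality in the $\cE$-case (here the $\ell_q$-sum sits inside the norm, which is why the stronger moment threshold $\sigma_{p,q}$ is needed), or term by term in the $\cN$-case (where the outer $\ell_q$-sum lets one use only $\sigma_p$). The bound $K\ge\max(\whole{s+1},0)$ guarantees enough decay in $k-j$ for $k>j$, while the moment conditions control the regime $k<j$.

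For the decomposition direction I would use a smooth Calder\'on reproducing formula $f=\sum_{k}\mathcal F^{-1}(\vz_k\mathcal F f)$, convergent in $\cs'(\rd)$, and subordinate each band-limited piece to the dyadic cubes $Q_{km}$. Defining the coefficients $\lambda_{km}$ as suitably normalised local means of $f$ against a fixed kernel and the $a_{km}$ as the corresponding localised, normalised pieces, the support and derivative requirements (i)--(iii) follow from the band-limitation of $\mathcal F^{-1}(\vz_k\mathcal Ff)$. The estimate $\|\lambda\mid a^s_{u,p,q}(\rd)\|\ls\|f\mid\MA(\rd)\|$ is then obtained by dominating $|\lambda_{km}|\chi_{k,m}$ by a Peetre-type maximal function of $\mathcal F^{-1}(\vz_k\mathcal Ff)$ and again applying the Morrey maximal inequality.

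The main obstacle is the passage to the Morrey norm in the synthesis step: because the Morrey norm is not built from a single integral against a doubling weight, the usual $L_p$ duality and interpolation shortcuts are unavailable, and everything hinges on having the Tang--Xu inequality at one's disposal, with the parameter $\sigma_{p,q}$ (respectively $\sigma_p$) exactly matching the number of moments required to make the kernel sum summable after the maximal operator has been applied. Verifying that the equivalent sequence norm of Remark~\ref{rmk-sequencenorm} transforms correctly under these operations in the $\cN$-case is the second delicate point.
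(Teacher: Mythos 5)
The paper does not actually prove Proposition~\ref{atomicdecomposition}: it is quoted directly from the literature, namely from Rosenthal \cite[Thms.~2.30 and 2.36]{MR-1} and Sawano--Tanaka \cite[Thm.~4.12]{ST2}. So your sketch can only be measured against those references, and at that level its architecture is faithful to them: the synthesis direction via the two-sided kernel estimate (smoothness $K$ controlling the regime $k>j$, the $N$ vanishing moments controlling $k<j$), the Tang--Xu vector-valued maximal inequality as the replacement for the classical Fefferman--Stein tool, and the explanation of why the $\cE$-case requires $\sigma_{p,q}$ while the $\cN$-case gets away with $\sigma_p$ are exactly the mechanism of the cited proofs.

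There is, however, one concrete flaw in your decomposition (analysis) direction. You assert that the support and derivative requirements (i)--(iii) of the atoms ``follow from the band-limitation of $\mathcal F^{-1}(\vz_k\mathcal F f)$.'' They cannot: a nonzero band-limited function is real-analytic and never compactly supported, and if instead you multiply $\mathcal F^{-1}(\vz_k\mathcal F f)$ by a cutoff subordinate to $Q_{km}$ you gain compact support but destroy the vanishing-moment condition (iii), which is indispensable here since the proposition allows $N\geq 0$ (needed precisely when $s\leq\sigma_p$, the situation exploited in Section~3 of the paper). The standard repair --- and what \cite{ST2} and \cite{MR-1}, following Frazier--Jawerth, actually do --- is to use a Calder\'on reproducing pair $f=\sum_k \psi_k\ast\gamma_k\ast f$ in which the synthesis kernel $\psi$ is chosen compactly supported with prescribed vanishing moments, and to define $a_{km}(x)=c\,\lambda_{km}^{-1}\int_{Q_{km}}\psi_k(x-y)\,(\gamma_k\ast f)(y)\dint y$ with $\lambda_{km}$ a normalised Peetre-type maximal quantity of $\gamma_k\ast f$ on $Q_{km}$. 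Then support, derivative bounds and moment conditions are all inherited from $\psi$, not from the band-limited piece, and the coefficient estimate proceeds as you describe. Without this device your construction does not produce atoms in the sense of Definition~\ref{defatoms}, so the decomposition half of the proposition remains unproved as sketched.
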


\section{Embeddings with smoothness $\frac{p}{u}\sigma_p$}

We return to the remarkable coincidence \eqref{mazzu} and consider the limiting situation when $p=1$. Recall that in case of $p=u=1$ it is well-known that $F^0_{1,2}(\rd) \hookrightarrow L_1(\rd)$ properly embedded, cf. \cite{ST95}. Now we concentrate on the Morrey situation when $p=1<u$ and can prove some partial counterpart of \eqref{mazzu}.

\begin{proposition} \label{prop-Eu12}
Let $1<u<\infty$. Then  
\[
\cE^0_{u,1,2}(\rd)\hookrightarrow \cM_{u, 1}(\rd) .
\]
\end{proposition}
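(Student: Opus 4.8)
The plan is to pass to the atomic decomposition of Proposition~\ref{atomicdecomposition} and to reduce the asserted embedding to an estimate on the coefficient sequence. For $s=0$, $p=1$, $q=2$ one computes $\sigma_{p,q}=d(1/\min(1,2)-1)_+=0$, so that $(K,N)$-atoms with $K=1$ and $N=0$ (that is, $C^1$-atoms on dilated dyadic cubes with one vanishing moment $\int_{\rd}a_{j,m}=0$) are admissible. Writing $f=\sum_{j\in\no,m\in\zd}\lambda_{j,m}a_{j,m}$ with $\|\lambda\mid e^0_{u,1,2}(\rd)\|\ls\|f\mid\cE^0_{u,1,2}(\rd)\|$, and using $2^{2j(0-d/u)}\chi^{(u)2}_{j,m}=\chi_{j,m}$, the sequence norm collapses to
\[
\|\lambda\mid e^0_{u,1,2}(\rd)\|=\Big\|\Big(\sum_{j,m}|\lambda_{j,m}|^2\chi_{j,m}\Big)^{1/2}\mid\cM_{u,1}(\rd)\Big\|.
\]
Since \eqref{Morrey-norm} is a supremum over dyadic cubes, it suffices to bound $|Q|^{1/u-1}\int_Q|f|$ for a fixed $Q=Q_{\nu,k}\in\cq$ by the right-hand side. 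On $Q$ only atoms with $c_1Q_{j,m}\cap Q\neq\emptyset$ survive, and I would split these into a fine-scale part $g_1$ (indices $j\ge\nu$) and a coarse-scale part $g_2$ (indices $j<\nu$).

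For $g_1$ I would exploit that the supports of the contributing atoms lie in a fixed dilate $CQ$ and that these are genuine $(1,0)$-atoms for the classical space $F^0_{1,2}(\rd)=\cE^0_{1,1,2}(\rd)$. The converse part of the classical atomic characterisation (with the same normalisation of atoms) together with the known embedding $F^0_{1,2}(\rd)\hookrightarrow L_1(\rd)$, cf.\ \cite{ST95}, gives $\|g_1\mid L_1(\rd)\|\ls\big\|\big(\sum_{j\ge\nu,m}|\lambda_{j,m}|^2\chi_{j,m}\big)^{1/2}\mid L_1(\rd)\big\|$. As the last integrand is supported in $CQ$ and dominated by $(\sum_{j,m}|\lambda_{j,m}|^2\chi_{j,m})^{1/2}$, the definition of the Morrey norm applied on the cube $CQ$ yields $\int_{CQ}(\cdots)^{1/2}\ls|CQ|^{1-1/u}\|\lambda\mid e^0_{u,1,2}(\rd)\|$, and since $|CQ|\sim|Q|$ we obtain $|Q|^{1/u-1}\int_Q|g_1|\ls\|f\mid\cE^0_{u,1,2}(\rd)\|$. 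This is the step encoding the cancellation inherent in the $\ell_2$-structure of the $\cE$-scale; it is the Morrey analogue of $h_1\hookrightarrow L_1$ and is the main obstacle, which I resolve by localising to $CQ$ and borrowing the classical limiting embedding, rather than estimating $|f|\le\sum_j|f_j|$ termwise (which would fail because of the $\ell_2\to\ell_1$ gap).

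The coarse-scale part $g_2$ is a finite sum: for each $j<\nu$ only boundedly many cubes $Q_{j,m}$ with $c_1Q_{j,m}\cap Q\neq\emptyset$ occur, and since $Q\subset CQ_{j,m}$ the atoms are bounded by $c_2$ there, whence $\int_Q|g_2|\le c_2|Q|\sum_{j<\nu}\sum_{m}|\lambda_{j,m}|$. Testing the Morrey norm of $(\sum|\lambda|^2\chi)^{1/2}$ against the single cube $Q_{j,m}$ gives the pointwise coefficient bound $|\lambda_{j,m}|\le 2^{jd/u}\|\lambda\mid e^0_{u,1,2}(\rd)\|$, so that $|Q|^{1/u-1}\int_Q|g_2|\ls|Q|^{1/u}\sum_{j<\nu}2^{jd/u}\,\|\lambda\mid e^0_{u,1,2}(\rd)\|$. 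Since $d/u>0$, the geometric sum satisfies $\sum_{j<\nu}2^{jd/u}\sim2^{\nu d/u}=|Q|^{-1/u}$, which cancels the prefactor and leaves $|Q|^{1/u-1}\int_Q|g_2|\ls\|f\mid\cE^0_{u,1,2}(\rd)\|$. Combining both parts (and noting $f|_Q=(g_1+g_2)|_Q\in L_1(Q)$, which also shows $f\in\Lloc(\rd)$) and taking the supremum over all dyadic $Q$ completes the proof; the case $u=1$ is the classical embedding, so the new content lies in the range $1<u<\infty$.
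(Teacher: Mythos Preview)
Your proof is correct, and it takes a genuinely different route from the paper's own argument for the fine-scale part.

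Both proofs start from an atomic decomposition of $f\in\cE^0_{u,1,2}(\rd)$ and split, for a fixed test cube $Q=Q_{\nu,k}$, the atomic sum into a coarse-scale piece (levels $j$ below $\nu$) and a fine-scale piece (levels $j$ from $\nu$ on). The coarse-scale estimates are essentially identical: one uses the pointwise bound $|\lambda_{j,m}|\le 2^{jd/u}\|\lambda\mid e^0_{u,1,2}(\rd)\|$ obtained by testing the Morrey square-function norm on $Q_{j,m}$, the uniform bound on atoms, and the geometric series $\sum_{j<\nu}2^{jd/u}\sim 2^{\nu d/u}$.

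The difference lies in the fine-scale part. The paper works with atoms carrying many vanishing moments ($N>\lfloor d(u-1)\rfloor$) and then invokes the non-smooth atomic decomposition of Hardy--Morrey spaces from \cite{ho}: the fine-scale piece $f_2$ is re-expanded into non-smooth atoms $b_{jm}$ with $\|b_{jm}\mid L_u(\rd)\|\le |Q_{jm}|^{1/u-1}$ and coefficients controlled in the sequence space $m_{u,1}$; the $L_1$-integral over $Q$ is then estimated by H\"older and the $m_{u,1}$-bound. Your argument is more elementary: you keep the same smooth atoms, take only the minimal moment condition $N=0$ admissible for $\cE^0_{u,1,2}$, observe that the restricted coefficient sequence lies in the classical $f^0_{1,2}$ with $L_1$-norm supported in a fixed dilate $CQ$, apply the converse atomic characterisation of $F^0_{1,2}(\rd)$ together with the known embedding $F^0_{1,2}(\rd)\hookrightarrow L_1(\rd)$ from \cite{ST95}, and finally bound $\int_{CQ}(\sum|\lambda|^2\chi)^{1/2}$ by the $\cM_{u,1}$-norm tested on $CQ$. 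This avoids the external input from \cite{ho} altogether, needs only one vanishing moment, and makes transparent that the limiting case $p=1$ is handled by localising the classical $h_1\hookrightarrow L_1$ phenomenon inside the Morrey framework. The paper's approach, on the other hand, ties the proof to the Hardy--Morrey theory and yields the same conclusion through the $L_u$-size control of non-smooth atoms.
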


\begin{proof}
\emph{Step 1.}  We prove that there exists a positive constant $C>0$ such that
\begin{equation}\label{0u11}
\|f|\cM_{u,1}(\rd)\| \le C\,\|f|\cE^0_{u,1,2}(\rd)\|\, .  
\end{equation} 
Let $f\in \cE^0_{u,1,2}(\rd)$. By the atomic decomposition theorem with atoms satisfying the conditions from Definition \ref{defatoms} with  $N>\whole{d(u-1)}$ and $c_1=3$,  we have
\begin{equation} \nonumber
f = \sum_{j=0}^\infty \sum_{m\in \zd} \lambda_{jm} a_{jm} 
\end{equation} 
with 
\begin{equation} \nonumber
\Big\| \Bigl(\sum_{j=0}^\infty \sum_{m\in \zd} |\lambda_{jm}|^2 \, \chi_{jm}(\cdot )\Bigr)^{\frac{1}{2}}  \big| \cM_{u,1}(\rd)\Big\| \le c \|f|\cE_{u,1,2}^0(\rd)\|. 
\end{equation}
Let $Q=Q_{\nu,k}$ be a dyadic cube and  decompose $f$ as follows
\begin{equation} \label{decomp-f}
f = \sum_{j=0}^{\nu} \sum_{m\in \zd} \lambda_{jm} a_{jm} +  \sum_{j=\nu+1}^\infty \sum_{m\in \zd} \lambda_{jm} a_{jm} =f_1+f_2.
\end{equation}
Remark that $f_i\in \cE_{u,1,2}^0(\rd)$ and $ \|f_i|\cE_{u,1,2}^0(\rd)\| \le  c \|f|\cE_{u,1,2}^0(\rd)\|$, $i=1,2$.

\smallskip

\emph{Step 2.}  We deal first with $f_1$. Recall $Q=Q_{\nu,k}$. Let  $\mu_{j,\nu}=\{m\in\zd: Q\cap\supp a_{j,m} \not= \emptyset\}$ and note that $\mu_{j,\nu}\leq c$ if  $ j\leq \nu$. Then we have
\begin{align}
 & |Q|^{\frac{1}{u}-1} \int_Q |f_1(x)| \dint x  \leq    |Q|^{\frac{1}{u}-1}  \sum_{j=0}^{\nu} \sum_{m\in \mu_{j,\nu}} |\lambda_{jm}| \int_Q |a_{jm}(x)| \dint x  \nonumber \\
 &  \leq c |Q|^{\frac{1}{u}}  \sum_{j=0}^{\nu}  \sum_{m\in \mu_{j,\nu}}   |\lambda_{jm}| = c |Q|^{\frac{1}{u}}  \sum_{j=0}^{\nu}  \sum_{m\in \mu_{j,\nu}}  |Q_{jm}|^{-1} \int_{Q_{jm}}\Bigl(|\lambda_{jm}|^2 \chi_{jm}(x) \Bigr)^{\frac{1}{2}} \dint x \nonumber \\
 &  \leq c  \sum_{j=0}^{\nu} 2^{(j-\nu)\frac{d}{u}} \sum_{m\in \mu_{j,\nu}}  |Q_{jm}|^{\frac{1}{u}-1} \int_{Q_{jm}}\Bigl(\sum_{\ell=1}^{\infty} \sum_{n\in\zd}|\lambda_{\ell n}|^2 \chi_{\ell n}(x) \Bigr)^{\frac{1}{2}} \dint x \nonumber 
 \end{align}
 \begin{align}
 &  \leq c  \Bigl(\sum_{j=0}^{\nu} 2^{(j-\nu)\frac{d}{u}}  \Bigr)\sup_{0\leq j\leq \nu} \sup_{m\in  \mu_{j,\nu} }\Big\{  |Q_{jm}|^{\frac{1}{u}-1} \int_{Q_{jm}}\Bigl(\sum_{\ell=1}^{\infty} \sum_{n\in\zd}|\lambda_{\ell n}|^2 \chi_{\ell,n}(x) \Bigr)^{\frac{1}{2}} \dint x \Big\}\nonumber \\
 & \leq   c \Big\| \Bigl(\sum_{j=0}^\infty \sum_{m\in \zd} |\lambda_{j m}|^2 \, \chi_{j,m}(\cdot )\Bigr)^{\frac{1}{2}}  \big| \cM_{u,1}(\rd)\Big\| \le c \|f|\cE_{u,1,2}^0(\rd)\|. \label{est-f1}
\end{align}

\smallskip

\emph{Step 3.} Now we deal with $f_2$ for what we rely on the results in \cite{ho}. By checking the proof in \cite[Thm.~4.3]{ho}, we can see that $f_2 \in \cE^0_{u,1,2}(\rd)$ can be decomposed in terms of non-smooth atoms $b_{jm}$ supported in cubes $cQ_{jm}$ with side lengths less than $2^{-\nu}$,  that is,   
$$
 f_2=\sum_{j=\nu+1}^\infty \sum_{m\in \zd} \lambda_{jm} a_{jm} = \sum_{j=\nu+1}^\infty \sum_{m\in \zd}  t_{jm} b_{jm}
$$
where
$$
\| t\vert m_{u,1}\|:=\sup_{P\in\mathcal{Q}}|P|^{\frac{1}{u}-1}\Bigl(\sum_{Q_{\ell k}\subset P}|t_{\ell k}| \Bigr)\leq c \|f_2|\cE_{u,1,2}^0(\rd)\| 
$$
and $t=\{t_{jm}\}_{j\in\no,m\in\zz^d}$. 

We remark that the non-smooth atoms from \cite{ho} differ from the smooth ones of Definition~\ref{defatoms} in condition \eqref{cond-atoms}; in particular, we have 
$$
\|b_{jm}\vert L_u(\rd)\|\leq |Q_{jm}|^{\frac{1}{u}-1},  \quad j\in\no,\; m\in\zz^d.
$$ 
Then, using H\"older's inequality and the properties of non-smooth atoms we obtain
\begin{align}
  |Q|^{\frac{1}{u}-1} &\int_Q |f_2(x)| \dint x    \leq  |Q|^{\frac{1}{u}-1} \sum_{j=\nu +1}^{\infty} \sum_{m\in \zd} |t_{jm}|  \int_{3Q_{jm}\cap Q} |b_{jm}(x)| \dint x  \nonumber \\
    &  \leq c  |Q|^{\frac{1}{u}-1} \sum_{j=\nu +1}^{\infty} \sum_{m\in \zd: Q_{jm}\subset cQ} |t_{jm}||Q_{jm}|^{1-\frac{1}{u}}  \|b_{jm}\vert L_u(\rd)\|  \nonumber \\
        &  \leq  c |Q|^{\frac{1}{u}-1} \sum_{j=\nu +1}^{\infty} \sum_{m\in \zd: Q_{jm}\subset c' Q} |t_{jm}| \leq  c \|(t_{jm})\vert m_{u,1}\| \nonumber \\
& \le c \|f|\cE_{u,1,2}^0(\rd)\|. \label{est-f2}
\end{align}
The desired outcome is then a consequence of \eqref{decomp-f}, \eqref{est-f1} and \eqref{est-f2}.
\end{proof}


\medskip

 We equip the spaces $ \Lloc(\rd)$ with the metric 
 \[ d(f,g)= \sum_{n=1}^\infty \frac{1}{2^n} \frac{\|f-g|L_1(\widetilde{Q}_n)\|}{1+\|f-g|L_1(\widetilde{Q}_n))\|}, \]
 where $\widetilde{Q}_n = [-n,n]^d$.  The space $ \Lloc(\rd)$ with this metric  is a complete locally convex metric space, i.e., a Fr\'echet space, cf. \cite[page 40]{MV}. One can easily see that a Morrey space $\cM_{u,p}(\rd)$ is continuously embedded into $ \Lloc(\rd)$ if $p\ge 1$. Indeed we have
 \begin{align*}
 d(f,g) 
 &\le C \sum_{n=1}^\infty \frac{1}{2^n} \frac{|\widetilde{Q}_n|^{1-\frac{1}{u}}}{1+\|f-g|L_1(\widetilde{Q}_n)\|}\|f-g|\cM_{u,p}(\rd)\| \le\\
 &\le C \sum_{n=1}^\infty \frac{|\widetilde{Q}_n|^{1-\frac{1}{u}}}{2^n }\|f-g|\cM_{u,p}(\rd)\| \le C \|f-g|\cM_{u,p}(\rd)\|.
 \end{align*}

\begin{theorem} \label{theorem-1-e}
Let  $0<p \leq u <\infty$,   $0<q\leq\infty$,  and $ s=\frac{p}{u}\sigma_p$. The following assertions are equivalent:
\begin{itemize}
\item[{\upshape\bfseries (i)}] $\cE^s_{u,p,q}(\rd)\hookrightarrow \Lloc(\rd) $,
\item[{\upshape\bfseries (ii)}] $\cE^s_{u,p,q}(\rd) \hookrightarrow \cM_{\frac{u}{\min(p,1)},\max(p,1)}(\rd) $,
\item[{\upshape\bfseries (iii)}] either \ $p\geq 1$ and $q\leq 2$, or \ $0<p<1$. 
\end{itemize}
\end{theorem}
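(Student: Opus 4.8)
I would prove the cyclic chain of implications (iii) $\Rightarrow$ (ii) $\Rightarrow$ (i) $\Rightarrow$ (iii). The implication (ii) $\Rightarrow$ (i) is immediate: the target space $\cM_{\frac{u}{\min(p,1)},\max(p,1)}(\rd)$ has fine index $\max(p,1)\ge 1$, so by the computation preceding the theorem it is continuously embedded in $\Lloc(\rd)$, and composing the two embeddings gives (i). The substance of the positive direction is therefore (iii) $\Rightarrow$ (ii), and the main work lies in the necessity (i) $\Rightarrow$ (iii).

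For (iii) $\Rightarrow$ (ii) I would split according to the sign of $1-p$. If $p\ge 1$ and $q\le 2$, then $\sigma_p=0$, so $s=0$ and the target is $\cM_{u,p}(\rd)$; here I combine the monotonicity \eqref{elem-1-t} in the fine index, $\cE^0_{u,p,q}(\rd)\hookrightarrow\cE^0_{u,p,2}(\rd)$, with the Mazzucato identification \eqref{mazzu} $\cE^0_{u,p,2}(\rd)=\cM_{u,p}(\rd)$ when $p>1$, respectively with Proposition~\ref{prop-Eu12} $\cE^0_{u,1,2}(\rd)\hookrightarrow\cM_{u,1}(\rd)$ when $p=1$. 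If $0<p<1$, then $s=\tfrac{p}{u}\sigma_p=\tfrac{d(1-p)}{u}>0$ and the target is $\cM_{u/p,1}(\rd)$; there is now no restriction on $q$, and the idea is to reduce to the previous case by a Sobolev-type embedding. Since the Morrey ratio is preserved, $\frac pu=\frac{1}{u/p}$, and the differential dimension is unchanged, $s-\frac{d}{u}=-\frac{dp}{u}=0-\frac{d}{u/p}$, while the integrability improves strictly ($u<u/p$, $s>0$), the $q$-independent Franke--Jawerth-type embedding for Triebel--Lizorkin--Morrey spaces (cf.\ \cite{hs12,hs13,YHMSY}) yields $\cE^{s}_{u,p,q}(\rd)\hookrightarrow\cE^0_{u/p,1,2}(\rd)$ for \emph{every} $q$. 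Applying Proposition~\ref{prop-Eu12} with $u/p$ in place of $u$ then gives $\cE^0_{u/p,1,2}(\rd)\hookrightarrow\cM_{u/p,1}(\rd)=\cM_{\frac{u}{\min(p,1)},\max(p,1)}(\rd)$, which is (ii). (Alternatively one can prove this embedding directly from Proposition~\ref{atomicdecomposition}, mimicking the three-step argument of Proposition~\ref{prop-Eu12}.)

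The necessity (i) $\Rightarrow$ (iii) I would establish in contrapositive form: assuming the restriction fails, i.e.\ $p\ge 1$ and $q>2$, I construct an $f\in\cE^0_{u,p,q}(\rd)$ that is not a regular distribution. Fix the unit cube $Q_{0,0}$ and a sequence $(d_j)_{j\ge0}$ with $\sum_j d_j^q<\infty$ but $\sum_j d_j^2=\infty$, which exists precisely because $q>2$. I set $\lambda_{jm}=d_j\,\epsilon_{jm}$ for all $m$ with $Q_{jm}\subset Q_{0,0}$ (and $0$ otherwise), with signs $\epsilon_{jm}\in\{\pm1\}$, and form $f=\sum_{j,m}\lambda_{jm}a_{jm}$ with atoms $a_{jm}$ carrying enough vanishing moments; these moments guarantee unconditional convergence of the series in $\cS'(\rd)$, independently of the signs. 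Two facts drive the argument. First, since the sequence norm only sees $|\lambda_{jm}|=d_j$, a direct evaluation of Definition~\ref{dts}(i) at $s=0$ collapses the dyadic factors and, because the inner function is constant on $Q_{0,0}$, gives $\|\lambda\mid e^0_{u,p,q}(\rd)\|\sim\big(\sum_j d_j^q\big)^{1/q}<\infty$; hence $f\in\cE^0_{u,p,q}(\rd)$ by Proposition~\ref{atomicdecomposition}, uniformly in the signs. Second, averaging over the signs and applying Khintchine's inequality replaces the oscillatory sum by its square function, so that the local $L_1$-mass is governed by $\big\|\big(\sum_{j,m}d_j^2 a_{jm}^2\big)^{1/2}\big\|_{L_1(Q_{0,0})}\sim\big(\sum_j d_j^2\big)^{1/2}=\infty$. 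Thus for a suitable realization of the signs the partial sums stay bounded in $\cE^0_{u,p,q}(\rd)$ while their local $L_1$-norms diverge, and testing $f$ against smooth functions adapted to the sign pattern (with the high-frequency tail controlled by the vanishing moments) shows that $f$ is not locally integrable. This contradicts $\cE^0_{u,p,q}(\rd)\hookrightarrow\Lloc(\rd)$ and proves (i) $\Rightarrow$ (iii).

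The main obstacle is precisely this construction. The delicate point is not the membership $f\in\cE^0_{u,p,q}(\rd)$, which is a routine sequence-space computation, but the genuine non-regularity of $f$: one must choose atoms and signs so that the series converges in $\cS'(\rd)$ (which forces real cancellation, paid for by the moment conditions) yet fails to define an element of $\Lloc(\rd)$, and one must confirm that the threshold lies exactly at $q=2$. This is where the Hilbertian ($\ell^2$) nature of the Littlewood--Paley square function, surfacing through Khintchine's inequality, is essential: it explains why $q\le2$ is harmless, being absorbed into the $L_2$-theory behind \eqref{mazzu} and Proposition~\ref{prop-Eu12}, whereas any $q>2$ is fatal. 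A construction of this type, adaptable to the present setting, appears in \cite{hms}.
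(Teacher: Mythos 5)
Your proposal is correct, and on the positive implications it coincides with the paper's own proof: (ii) $\Rightarrow$ (i) comes from the observation, made just before the theorem, that a Morrey space with local index $\max(p,1)\geq 1$ embeds continuously into $\Lloc(\rd)$, and (iii) $\Rightarrow$ (ii) is split exactly as you split it --- $p>1$ via \eqref{elem-1-t} and \eqref{mazzu}, $p=1$ via Proposition~\ref{prop-Eu12}, and $0<p<1$ via a $q$-independent Sobolev-type reduction to the case $p=1$ (the paper uses $\cE^s_{u,p,\infty}(\rd)\hookrightarrow\cE^0_{\frac{u}{p},1,1}(\rd)\hookrightarrow\cM_{\frac{u}{p},1}(\rd)$, citing \cite[Thm.~3.1]{hs13}; your variant landing in $\cE^0_{\frac{u}{p},1,2}(\rd)$ is the same mechanism). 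The genuine difference is in (i) $\Rightarrow$ (iii): the paper proves nothing here and simply cites \cite[Thm.~3.4]{hms}, whereas you sketch the counterexample construction underlying that citation (random signs $\epsilon_{jm}$ on the dyadic cubes inside $Q_{0,0}$, coefficients with $\sum_j d_j^q<\infty$ but $\sum_j d_j^2=\infty$, Khintchine's inequality). Your route buys self-containedness and makes visible why $q=2$ is the exact threshold; the paper's buys brevity. Two points in your sketch need tightening before it is a complete proof. First, the lower bound $\bigl\|\bigl(\sum_{j,m}d_j^2a_{jm}^2\bigr)^{1/2}\bigr\|_{L_1(Q_{0,0})}\gtrsim\bigl(\sum_j d_j^2\bigr)^{1/2}$ is not automatic: atoms with a vanishing moment must change sign and hence vanish on part of each cube, so one needs an overlap argument, e.g. $\sum_m a_{jm}(x)^2\geq c$ on a set $E_j\subset Q_{0,0}$ with $|E_j|\geq\theta|Q_{0,0}|$, and then $\bigl(\sum_j d_j^2\chi_{E_j}(x)\bigr)^{1/2}\geq\bigl(\sum_j d_j^2\bigr)^{-1/2}\sum_j d_j^2\chi_{E_j}(x)$, which upon integration gives the claim. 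Second, your closing step --- ``testing $f$ against smooth functions adapted to the sign pattern'' to certify that a single limit distribution is non-regular --- is both vague and avoidable: since (i) asserts a \emph{continuous} embedding, it maps bounded sets to bounded sets, so it suffices to produce, for each $n$, a sign choice with $\|f_n|L_1(\widetilde{Q}_1)\|$ at least half its average over signs (hence tending to $\infty$) while $\|f_n|\cE^0_{u,p,q}(\rd)\|\leq C$ uniformly; this already contradicts (i), and no single non-regular $f$ is needed. That boundedness scheme is precisely the one the paper itself employs in Step~2 of the proof of Theorem~\ref{theorem-n_290319}.
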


\begin{proof} Note that the implication   (ii)  $\Rightarrow$ (i) is already shown, and the part 
(i)  $\Rightarrow$ (iii) is covered by \cite[Thm.~3.4]{hms}.

\emph{Step 1}.  Consider first the case  $p=1$. The implication  (iii)  $\Rightarrow$ (ii) is a consequence of Proposition \ref{prop-Eu12} and an elementary embedding, 
\[
\cE^0_{u,1,q}(\rd) \hookrightarrow \cE^0_{u,1,2}(\rd)  \hookrightarrow \cM_{u,1}(\rd) \quad \text{if} \quad 0<q\leq 2 .
\]
 
 \smallskip

\emph{Step 2}.  Let  $0<p<1$.  Then $s=\frac{p}{u}\sigma_p = \frac{d}{u}(1-p)$. The general properties of embeddings between Triebel-Lizorkin-Morrey spaces, cf. \cite[Thm.~3.1]{hs13}, and the first step imply   
\begin{equation} \nonumber
\cE^s_{u,p,\infty} (\rd) \hookrightarrow \cE^0_{\frac{u}{p},1,1} (\rd) \hookrightarrow \cM_{\frac{u}{p},1}(\rd),  
\end{equation} 
which shows that  (iii)  $\Rightarrow$ (ii), 
and there is nothing more to be proved in this case.

 \smallskip
 
\emph{Step 3}.  Now assume $p>1$. The implication  (iii)  $\Rightarrow$ (ii) follows from 
\[
\cE^0_{u,p,q}(\rd) \hookrightarrow \cE^0_{u,p,2}(\rd) =\cM_{u,p}(\rd) \quad \text{if} \quad 0<q\leq 2 .
\]
\end{proof}

\begin{remark}
The above theorem improves the statements of \cite[Thm.~3.4]{hms}  and extends Theorem 3.3.2(i) and Corollary 3.3.1 in \cite{ST95} from classical Triebel-Lizorkin spaces to Triebel-Lizorkin-Morrey spaces.
\end{remark}

\ignore{
  \open{Open: Let  $1<p \le u<\infty$ {\color{red}and $p\le 1$ ??}. Let  $s=0$. The following conditions are equivalent:
\begin{itemize}
\item[{\upshape\bfseries (i)}] $\cN^0_{u,p,q}(\rd)\subset \Lloc(\rd) $,
\item[{\upshape\bfseries (ii)}] $\cN^0_{u,p,q}(\rd) \hookrightarrow \cM_{u,p}(\rd) $,
\item[{\upshape\bfseries (iii)}] $0<q\le \min(p,2)$. 
\end{itemize} 
(Equivalence holds if $p\ge 2$. $1<p<2$? )

A weaker version: \\
Let  $1<p \le u<\infty$ {\color{red}and $p\le 1$??}. Let  $s=0$. The following conditions are equivalent:
\begin{itemize}
\item[\rm (ii)] $\cN^0_{u,p,q}(\rd) \hookrightarrow \cM_{u,p}(\rd) $,
\item[\rm(iii)] $0<q \le \min(p,2)$. 
\end{itemize}
} }

\begin{theorem} \label{theorem-n_290319}
Let  $0<p \leq u <\infty$,   $0<q\leq\infty$,  and $ s=\frac{p}{u}\sigma_p$. The following assertions are equivalent:
\begin{itemize}
\item[{\upshape\bfseries (i)}] $\cN^s_{u,p,q}(\rd) \hookrightarrow  \Lloc(\rd) $,
\item[{\upshape\bfseries (ii)}] $\cN^s_{u,p,q}(\rd) \hookrightarrow \cM_{\frac{u}{\min(p,1)},\max(p,1)}(\rd) $,
\item[{\upshape\bfseries (iii)}] $0<q \leq \min \bigl(\max(p,1),2 \bigr)$. 
\end{itemize}
\end{theorem}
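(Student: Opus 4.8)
The plan is to establish the cycle (ii) $\Rightarrow$ (i) $\Rightarrow$ (iii) $\Rightarrow$ (ii), exactly as in Theorem \ref{theorem-1-e}. The implication (ii) $\Rightarrow$ (i) is immediate from the computation preceding Theorem \ref{theorem-1-e}, since the second Morrey index $\max(p,1)\ge 1$ guarantees $\cM_{\frac{u}{\min(p,1)},\max(p,1)}(\rd)\hra\Lloc(\rd)$. For (i) $\Rightarrow$ (iii) I would invoke the sharp characterisation of regular distributions for the Besov--Morrey scale at critical smoothness (the $\cN$-counterpart of \cite[Thm.~3.4]{hms}); the necessity of $q\le\min(\max(p,1),2)$ is the delicate point and rests on producing, for every $q$ above the threshold, a lacunary atomic series $f=\sum_{j,m}\lambda_{jm}a_{jm}$ with $\lambda\in n^s_{u,p,q}(\rd)$ but $f\notin\Lloc(\rd)$.

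The substantive part is (iii) $\Rightarrow$ (ii), which I would split according to the size of $p$, paralleling the three steps of Theorem \ref{theorem-1-e} but replacing each $\cE$-ingredient by its $\cN$-counterpart through Sawano's embedding \eqref{elem}. For $p=1$ (so $s=0$), the monotonicity \eqref{elem-1-t} reduces matters to $q=1$, and \eqref{elem} with fine index $2$ gives $\cN^0_{u,1,1}(\rd)\hra\cE^0_{u,1,2}(\rd)$; composing with Proposition \ref{prop-Eu12} yields $\cN^0_{u,1,1}(\rd)\hra\cM_{u,1}(\rd)$, which is precisely the target since $\min(1,1)=\max(1,1)=1$. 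For $p>1$ (again $s=0$), \eqref{elem-1-t} reduces to $q=\min(p,2)$, and \eqref{elem} with fine index $2$ gives $\cN^0_{u,p,\min(p,2)}(\rd)\hra\cE^0_{u,p,2}(\rd)$; the Mazzucato coincidence \eqref{mazzu}, $\cE^0_{u,p,2}(\rd)=\cM_{u,p}(\rd)$, closes this case.

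For $0<p<1$ the critical smoothness is $s=\frac{d}{u}(1-p)$ and the threshold forces $q\le 1$, so by \eqref{elem-1-t} it again suffices to treat $q=1$. Here I would use the Besov--Morrey Sobolev embedding on the critical line from \cite{hs13}: since $\frac{p}{u}=\frac{1}{u/p}$, $u\le u/p$ and $s-\frac{d}{u}=-\frac{dp}{u}=0-\frac{d}{u/p}$, with matching fine indices $1\le 1$, one has $\cN^{s}_{u,p,1}(\rd)\hra\cN^0_{u/p,1,1}(\rd)$. The already-proved case $p=1$, applied with $u$ replaced by $u/p\,(>1$ when $u>p)$, then gives $\cN^0_{u/p,1,1}(\rd)\hra\cM_{u/p,1}(\rd)$, which is the target for $0<p<1$.

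The main obstacle is the necessity (i) $\Rightarrow$ (iii), i.e.\ the sharpness of the threshold. The structural reason that condition (iii) here is stricter than in Theorem \ref{theorem-1-e} is visible already in the last step: unlike the $\cE$-scale, which permits passage to an arbitrarily small fine index on the critical line, the pure Besov critical embedding requires $q_1\le q_2$, so one cannot feed in $q=\infty$ on the left and is forced down to $q\le 1$ when $0<p<1$. Verifying the sharpness of each threshold separately --- $q\le p$ for $1\le p<2$, $q\le 2$ for $p\ge 2$, and $q\le 1$ for $0<p<1$ --- by explicit counterexamples and matching them to the companion results of \cite{hms} and \cite{HaSM3} is where the real work lies; the degenerate cases $p=u$ reduce to the classical Besov statements of \cite{ST95}.
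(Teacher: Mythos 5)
Your treatment of (ii) $\Rightarrow$ (i) and of the sufficiency (iii) $\Rightarrow$ (ii) is correct and essentially identical to the paper's argument: the paper also handles $p=1$ via \eqref{elem} and Proposition \ref{prop-Eu12}, $p>1$ via \eqref{elem} and the coincidence \eqref{mazzu}, and $0<p<1$ by the critical Sobolev-type embedding $\cN^s_{u,p,1}(\rd)\hookrightarrow\cN^0_{u/p,1,1}(\rd)$ followed by the case already settled with $u$ replaced by $u/p$.

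The genuine gap is in (i) $\Rightarrow$ (iii). You propose to ``invoke the $\cN$-counterpart of \cite[Thm.~3.4]{hms}'', but no such counterpart exists for all $p$: the results and arguments of \cite{hms} cover only $0<p\le 1$ and $2\le p<\infty$. The remaining case $1<p<2$, i.e.\ the necessity of $q\le p$, is precisely the new content of this theorem, and you explicitly defer it (``where the real work lies'') without carrying it out. The paper closes this case by a concrete construction: starting from a smooth atom $a$ with vanishing integral, it forms for each fixed $n\in\nn$ the composite atoms $b_{j,m}=2^{j-n}\sum_{k:\,Q_{n,k}\subset Q_{j,m}}a_{n,k}$, $1\le j\le n$, all built from translates of the \emph{same finest-scale} atoms, and sets $f_n=\sum_{j=1}^n j^{-1/p}\sum_{m:\,Q_{j,m}\subset Q_{0,0}}b_{j,m}$. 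The equivalent sequence norm of Remark \ref{rmk-sequencenorm} gives $\|f_n|\cN^0_{u,p,q}(\rd)\|\le\bigl(\sum_{j=1}^\infty j^{-q/p}\bigr)^{1/q}<\infty$ uniformly in $n$ \emph{because} $q>p$, while at scale $n$ all contributions superpose coherently (no cancellation), so that
\begin{equation*}
\|f_n|L_1(\widetilde{Q}_1)\|\;=\;c\,n\,2^{-n}\sum_{j=1}^n j^{-1/p}2^{j}\;\ge\; c\,n^{1-1/p}\;\longrightarrow\;\infty ,
\end{equation*}
contradicting the continuity of the embedding into the Fr\'echet space $\Lloc(\rd)$. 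Note also that the paper does not produce a single $f\in\cN^s_{u,p,q}(\rd)$ with $f\notin\Lloc(\rd)$, as your sketch suggests; it exhibits a bounded sequence with unbounded image, which avoids any discussion of convergence of an infinite atomic series to a non-regular distribution. Without this construction (or an equivalent one) your proposal does not prove the theorem in the range $1<p<2$.
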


\begin{proof} The case $p=u$ is well known, cf. \cite[Thm.~3.3.2, Cor.~3.3.1]{ST95}, so we can restrict ourselves to the case $p<u$.

\emph{Step 1}. We prove that (iii) $\Rightarrow$ (ii) $\Rightarrow$ (i). The second implication  has been already shown so it remains to prove the first one. 

 Let $0<p\le 1$. For $0<q\leq 1$, by  general properties of embeddings, in particular \cite[Thm.~3.2]{hs12},  and Theorem \ref{theorem-1-e},  we have
\[
\cN^s_{u,p,q} (\rd) \hookrightarrow \cN^0_{\frac{u}{p},1,1} (\rd) \hookrightarrow \cE^0_{\frac{u}{p},1,1}(\rd) \hookrightarrow \cM_{\frac{u}{p},1}(\rd)
\]
which  proves the implication (iii) $\Rightarrow$ (ii).

Consider the case $p\geq 1$.  If $0<q \leq \min(p,2)$, then elementary embeddings, \eqref{mazzu} and Theorem~\ref{theorem-1-e} yield 
\[
\cN^0_{u,p,q}(\rd) \hookrightarrow \cE^0_{u,p,2}(\rd)  \hookrightarrow \cM_{u,p}(\rd),
\]
which shows  that  (iii)  $\Rightarrow$ (ii).


%
\emph{Step 2}. We prove that the condition (i) implies (iii). For $0<p<1$ the implication  was proved in \cite{hms}, cf. Step 2 of the proof of Theorem 3.4. But the same argument works for $p=1$. Also the case  $2\le p<\infty$ is covered by Theorem 3.4 in \cite{hms}. 

 It remains to consider the case $1<p<2$.  We assume that embedding (i) holds for some $q>p$.  We choose a smooth  function    $\tilde{a}$ such that 
\begin{equation}
  \supp \tilde{a} \subset \left[0,\frac{1}{2}\right]^d, \qquad  0\le \tilde{a} \le 1 \qquad \text{and} \qquad   \left|\frac{\partial \tilde{a}}{\partial x_i}\right|\le 1,   \quad i  =1,\dots, d,
\end{equation}
and put 
\begin{equation}
a(x)= \tilde{a}(x) - \tilde{a}(-x). 
\end{equation}
Then $a$ is an atom satisfying the first moment condition and supported in $[-\frac{1}{2},\frac{1}{2}]^d$. Moreover we consider the family of atoms $a_{j,m}$, $j=0,1,\ldots$ and $m\in \mathbb{Z}^d$ that are the dilations and translations of $a$, i.e.,  
\[ a_{j,m}(x)=a(2^{j}x-m-(1/2,\ldots ,1/2)).  \] 
The function $a_{j,m}$ is an atom supported in $Q_{j,m}$. 

Let us fix $n\in \mathbb{N}$. For any cube $Q_{j,m} \subset  Q_{0,0}$, $1\le j\le n$, $m\in\zd$, we define a function 
\[ b_{j,m}(x) = 2^{j-n}\sum_{k: Q_{n,k}\subset Q_{j,m}} a_{n,k}(x).\]
The function  $b_{j,m}$ is an atom satisfying the first moment condition supported in $Q_{j,m}$. 
We define a smooth  function $f_n$ by the following finite sum
\[f_n(x) = \sum_{j=1}^n j^{-\frac{1}{p}} \sum_{m: Q_{j,m} \subset  Q_{0,0}}b_{j,m}(x).\] 
The functions $f_n$ belong to  $\cN^0_{u,p,q}(\rd)$ and their  norms are uniformly bounded since 
\begin{align*}
\|f_n|\cN^0_{u,p,q}(\rd)\| & \le \left( \sum_{j=1}^n 2^{-jq\frac{d}{u}} \sup_{\nu:\nu \le j ,k\in \mathbb{Z}^d} 2^{dq(j-\nu)(\frac{1}{u}-\frac{1}{p})}  \Big(\sum_{Q_{j,m}\subset Q_{\nu,k}\subset Q_{0,0}} j^{-1} \Big)^\frac{q}{p} \right)^\frac{1}{q} \\
& \le \left( \sum_{j=1}^n  j^{-\frac{q}{p}}2^{-jq\frac{d}{u}} \sup_{\nu:0\leq \nu \le j ,k\in \mathbb{Z}^d} 2^{dq\frac{(j-\nu)}{u}}\right)^\frac{1}{q} \\
&  \le 
\left( \sum_{j=1}^\infty  j^{-\frac{q}{p}}\right)^\frac{1}{q}= C < \infty , 
\end{align*} 
recall $q>p$. 
On the other hand, for $\nu\in\nn$,  
\begin{align*}
  \|f_n|L_1(\widetilde{Q}_\nu)\| &= \|f_n|L_1(\widetilde{Q}_1)\| = \int_{\widetilde{Q}_1} \left| \sum_{j=1}^n j^{-\frac{1}{p}} 
  \sum_{Q_{j,m} \subset  Q_{0,0}}
b_{j,m}(x)\right| \dint x \\ 
& = \int_{\widetilde{Q}_1} \left| \sum_{j=1}^n j^{-\frac{1}{p}} 2^{j-n}\sum_{Q_{j,m} \subset  Q_{0,0}}
 \sum_{Q_{n,k}\subset Q_{j,m}} a_{n,k}(x)\right| \dint x \\
& =  \int_{\widetilde{Q}_1} \left| \sum_{j=1}^n j^{-\frac{1}{p}} 2^{j-n} n \sum_{Q_{n,k} \subset  Q_{0,0}}
  a_{n,k}(x)\right| \dint x  \\ 
& = \int_{\widetilde{Q}_1} \left( \sum_{j=1}^n j^{-\frac{1}{p}} 2^{j-n} n\right)\left| \sum_{Q_{n,k} \subset  Q_{0,0}}
  a_{n,k}(x)\right| \dint x  \\ 
 &=   \sum_{j=1}^n j^{-\frac{1}{p}} 2^{j-n} n \sum_{Q_{n,k} \subset  Q_{0,0}}
  \int_{\widetilde{Q}_1} | a_{n,k}(x)| \dint x  \\
  & = c n 2^{-n}  \sum_{j=1}^n j^{-\frac{1}{p}} 2^j   \ge c  n^{1-\frac{1}{p}}. 
  \end{align*}
Thus $\|f_n|L_1(\widetilde{Q}_\nu)\|= \|f_n|L_1(\widetilde{Q}_1)\|\rightarrow \infty$ if $n\rightarrow \infty$ since $1<p$. The local base at zero in $ \Lloc(\rd)$ is given by the sets $V_{k,\varepsilon} = \{f: \|f|  L_1(\widetilde{Q}_\nu)\|<\varepsilon,\quad  \nu=1,\ldots , k\}$. So the sequence $f_n$ is not bounded in $ \Lloc(\rd)$. This contradicts  the continuity of the embedding (i).  
\end{proof}

\begin{corollary}\label{theorem-2-n}
Let $0<p\le u < v<\infty$, $1\le q \le v$  and     $s=\frac{d}{u}- \frac{d}{v}$.  
Then the following assertions  are equivalent: 
\begin{itemize}
\item[{\upshape\bfseries (i)}] $\cE^s_{u,p,\infty}(\rr^d)\hookrightarrow \cM_{v,q}(\rr^d) $,
\item[{\upshape\bfseries (ii)}] $\cN^s_{u,p,q}(\rr^d)\hookrightarrow \cM_{v, q}(\rr^d)    $,
\item[{\upshape\bfseries (iii)}] $q\le v\frac{p}{u}$. 
\end{itemize}
\end{corollary}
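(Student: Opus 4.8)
The plan is to reduce everything to the identification of the target Morrey space with a Triebel--Lizorkin--Morrey space of smoothness $0$, and then to invoke the two critical-smoothness theorems just proved. Since $1\le q\le v$, the target small index is at least $1$, so $\cM_{v,q}(\rd)=\cE^0_{v,q,2}(\rd)$ for $q>1$ by \eqref{mazzu}, whereas for the endpoint $q=1$ only $\cE^0_{v,1,2}(\rd)\hookrightarrow\cM_{v,1}(\rd)$ is available (Proposition~\ref{prop-Eu12}). In either case Theorem~\ref{theorem-1-e}, applied with $u\mapsto v$, $p\mapsto q$ (so that $s_0=\tfrac{q}{v}\sigma_q=0$), tells us precisely that $\cE^0_{v,q,\tilde q}(\rd)\hookrightarrow\cM_{v,q}(\rd)$ for every $\tilde q\le 2$, and Theorem~\ref{theorem-n_290319} that $\cN^0_{v,q,\tilde q}(\rd)\hookrightarrow\cM_{v,q}(\rd)$ for every $\tilde q\le\min(q,2)$. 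Moreover the smoothness $s=\tfrac{d}{u}-\tfrac{d}{v}$ places both prospective embeddings on the critical differential-dimension line $s-\tfrac{d}{u}=-\tfrac{d}{v}$, and condition (iii), $q\le v\tfrac{p}{u}$, is exactly $\tfrac{u}{p}\le\tfrac{v}{q}$, i.e.\ the source Morrey ratio does not exceed the target one.

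For the sufficiency, (iii)$\Rightarrow$(i) and (iii)$\Rightarrow$(ii), I would factor each embedding through $\cE^0_{v,q,2}(\rd)$. For (i) this is the Sobolev-type embedding between Triebel--Lizorkin--Morrey spaces on the critical line, $\cE^s_{u,p,\infty}(\rd)\hookrightarrow\cE^0_{v,q,2}(\rd)$, which by the general embedding theory \cite[Thm.~3.1]{hs13} holds under $\tfrac{u}{p}\le\tfrac{v}{q}$; composing with the paragraph above yields (i). For (ii) I would use the Franke--Jawerth type embedding $\cN^s_{u,p,q}(\rd)\hookrightarrow\cE^0_{v,q,2}(\rd)$, in which the Besov fine index $q$ is matched to the target integrability $q$, again provided by \cite{hs13,hs12} under the same scaling condition, and compose. (Alternatively, for $q\le 2$ one may stay within the Besov scale, $\cN^s_{u,p,q}(\rd)\hookrightarrow\cN^0_{v,q,q}(\rd)\hookrightarrow\cM_{v,q}(\rd)$, via \cite[Thm.~3.2]{hs12} and Theorem~\ref{theorem-n_290319}; the factorisation through $\cE^0_{v,q,2}$ is what covers the remaining range $q>2$ up to the boundary.) The endpoint $q=1$ is absorbed automatically, since Theorem~\ref{theorem-1-e} and Proposition~\ref{prop-Eu12} already furnish $\cE^0_{v,1,2}(\rd)\hookrightarrow\cM_{v,1}(\rd)$, so no appeal to \eqref{mazzu} (valid only for $q>1$) is needed there.

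For the necessity, (i)$\Rightarrow$(iii) and (ii)$\Rightarrow$(iii), I would argue by contraposition: if $q> v\tfrac{p}{u}$, i.e.\ $\tfrac{u}{p}>\tfrac{v}{q}$, then the corresponding Sobolev/Franke--Jawerth embedding into $\cE^0_{v,q,2}(\rd)=\cM_{v,q}(\rd)$ fails, and this sharpness is the necessity half of the characterisations in \cite{hs13,hs12}. If a self-contained argument is preferred, the failure can be exhibited by a test family in the spirit of Step~2 of the proof of Theorem~\ref{theorem-n_290319}: a superposition $f_N=\sum_{j\le N}\sum_{m}\lambda_{j,m}a_{j,m}$ of first-order atoms, with amplitudes and scales chosen so that $\{f_N\}$ stays bounded in $\cE^s_{u,p,\infty}(\rd)$ (resp.\ $\cN^s_{u,p,q}(\rd)$), estimated through the sequence-space quasi-norms of Definition~\ref{dts} and Remark~\ref{rmk-sequencenorm}, while the $\cM_{v,q}$-norm of $f_N$ diverges exactly when $\tfrac{u}{p}>\tfrac{v}{q}$, the gap between the source and target scaling exponents driving the divergence.

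I expect the necessity, in particular the sharpness at the boundary $q=v\tfrac{p}{u}$, to be the main obstacle: the sufficiency is essentially a bookkeeping of known Sobolev and Franke--Jawerth embeddings composed with the two new theorems and Mazzucato's identity, whereas showing that the embedding genuinely breaks down for $q>v\tfrac{p}{u}$ requires either the full necessity part of \cite{hs13,hs12} or an explicit extremal family with a careful matching of the Morrey scaling exponents on both sides. A secondary technical point is the endpoint $q=1$, where \eqref{mazzu} is unavailable and one must route through Proposition~\ref{prop-Eu12} and Theorem~\ref{theorem-1-e} instead.
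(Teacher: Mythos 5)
Your sufficiency argument is sound and essentially coincides with the paper's: factor through $\cE^0_{v,q,2}(\rd)$ via the Sobolev embedding of \cite{hs13} for (i) and the Franke--Jawerth embedding for (ii), identify the target with $\cM_{v,q}(\rd)$ by \eqref{mazzu} when $q>1$, and use Proposition~\ref{prop-Eu12} (through Theorems~\ref{theorem-1-e} and \ref{theorem-n_290319}) at the endpoint $q=1$. The necessity for $q>1$ is also the paper's idea, but you should make the reduction precise: rather than appealing to an unstated ``sharpness of the Sobolev/Franke--Jawerth embeddings'', chain the hypothesised embedding with $\cM_{v,q}(\rd)=\cE^0_{v,q,2}(\rd)\hookrightarrow \cN^0_{v,q,\infty}(\rd)$ (by \eqref{mazzu} and \eqref{elem}) and then invoke the necessity part of the characterisation of embeddings between Besov--Morrey spaces, \cite[Thm.~3.3]{hs12}, which yields exactly $\frac{q}{v}\le\frac{p}{u}$; this is the form of the argument the paper uses.

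The genuine gap is the necessity at the endpoint $q=1$. There \eqref{mazzu} is unavailable, and Proposition~\ref{prop-Eu12} gives only $\cE^0_{v,1,2}(\rd)\hookrightarrow\cM_{v,1}(\rd)$, which is the wrong direction: from a hypothesised embedding \emph{into} $\cM_{v,1}(\rd)$ you cannot pass to an embedding into $\cE^0_{v,1,2}(\rd)$, so your contraposition collapses, and your closing remark that one ``routes through Proposition~\ref{prop-Eu12} and Theorem~\ref{theorem-1-e} instead'' repairs only the sufficiency half. The case is not vacuous: for $q=1$ and $p<\frac{u}{v}$ condition (iii) fails, so (i) and (ii) must be disproved. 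The paper closes it with a tool you never invoke: any embedding into $\cM_{v,1}(\rd)\hookrightarrow\Lloc(\rd)$ forces $s\ge\frac{p}{u}\sigma_p$ by \cite[Thm.~3.3]{hms}, and since here $s=\frac{d}{u}-\frac{d}{v}$, this is equivalent to $p\ge\frac{u}{v}$, whence $q=1\le\frac{pv}{u}$ automatically. Your proposed alternative, an explicit extremal family in the spirit of Step~2 of the proof of Theorem~\ref{theorem-n_290319}, could in principle also close this case, but it is only sketched: the amplitudes are not specified, and neither the uniform boundedness in $\cE^s_{u,p,\infty}(\rd)$ resp.\ $\cN^s_{u,p,q}(\rd)$ nor the divergence in $\cM_{v,q}(\rd)$ is verified, which is precisely the work the citation-based route was meant to avoid.
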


\begin{proof} 
\emph{Step 1.} First we prove the sufficiency of the condition  {\upshape\bfseries (iii)}. 
If $p= \frac u v$, then $\frac{p}{u}\sigma_p = \frac{d}{u}- \frac{d}{v}=s$. So {\upshape\bfseries (i)} 
follows from Theorem   \ref{theorem-1-e}. If $p > \frac u v$, then $s= \frac{d}{u}- \frac{d}{v}>0$ and $q_{\max}=\frac{pv}{u}>1$. In that case the embedding is a consequence of Sobolev embeddings \cite[Thm.~3.2]{hs13},  the Paley-Littlewood formula \eqref{mazzu} and the embeddings between Morrey spaces.
   
Similarly, the embedding {\upshape\bfseries (ii)} 
in the case $p= \frac u v$  follows from Theorem \ref{theorem-n_290319} since  $q= 1$.  To prove {\upshape\bfseries (ii)} 
for $p > \frac u v$ and $1<q\leq p\frac v  u$ one can use the Franke-Jawerth embeddings for smoothness Morrey spaces, cf. \cite[Thm.~4.3]{hs13}. Indeed,  we have
\[
\cN^s_{u,p,q}(\rr^d)\hookrightarrow  \cE^0_{v,q,2}(\rr^d) = \cM_{v,q}(\rr^d). 
\]
If $p>\frac u v$ and $q=1$, then 
\[\cN^s_{u,p,q}(\rr^d)\hookrightarrow  \cN^s_{u,\frac{u}{v},q}(\rr^d) \hookrightarrow \cM_{v,1}(\rr^d),\] 
the statement coincides with our previous statement, Theorem~\ref{theorem-n_290319}. We refer to \cite{hs12} for a proof of the first embedding. 

\emph{Step 2.} Now we prove the necessity of the conditions. 

 First we assume that
\begin{align}
\cN^s_{u,p,q}(\rr^d)\hookrightarrow \cM_{v, q}(\rr^d).    \label{cor-n-hip}
\end{align} 
The last embeddings implies that the space  $\cN^s_{u,p,q}(\rr^d)$ consists of locally integrable functions,  so $s\ge \frac{p}{u}\sigma_p$, cf. Theorem  3.3. in \cite{hms}. This implies $ p\ge \frac{u}{v}$. 
 If $1<q$, 
 then  by   \eqref{elem} and \eqref{mazzu} we get  
\[
\cN^s_{u,p,q}(\rd)\hookrightarrow \cM_{v, q}(\rr^d)= \cE^0_{v, q,2}(\rd) \hookrightarrow \cN^0_{v, q,\infty}(\rd).
\]
But  Theorem 3.3 of \cite{hs12}, implies $q\le \frac{pv}{u}$. If $q= 1$, then the condition $q\leq \frac{pv}{u}$ is satisfied automatically since we have already proved that $p\ge \frac{u}{v}$.

Using the same argument as above we can prove that    $q\le \frac{pv}{u}$ if
\[\cE^s_{u,p,\infty}(\rr^d)\hookrightarrow \cM_{v, q}(\rr^d) .\]
\end{proof}

\begin{remark}\label{at-into-M}
The embeddings {\upshape\bfseries (i)}  and {\upshape\bfseries (ii)} of the last corollary holds also for $q< 1$ if $ v\frac{p}{u}\ge 1 $ since then we increase the target space whereas the source space is unchanged or smaller. 
\end{remark}
\begin{remark} 
Another class of generalisations of Besov and Triebel-Lizorkin spaces are Besov-type spaces $\bt(\rd)$ and Triebel-Lizorkin-type spaces $\ft(\rd)$, $0 < p,q \leq\infty$ (with $p<\infty$ in case of $\ft$), $\tau\ge 0$, $s \in \real$  introduced in \cite{ysy}. The spaces are strictly related to $\MB(\rd)$ and $\MF(\rd)$ spaces if $0\le \tau< \frac{1}{p}$. Namely 
\begin{align}
\ft(\rd)= \MF(\rd) \qquad\text{if} \qquad 0\le \tau = \frac{1}{p}-\frac{1}{u}< \frac{1}{p},\label{Ft-E}
\intertext{and} 
\MB(\rd) \hookrightarrow \bt(\rd)\qquad\text{if} \qquad 0\le \tau = \frac{1}{p}-\frac{1}{u}< \frac{1}{p}.\label{Bt-N} 
\end{align}
The Besov-Morrey and Besov-type spaces coincide only if $\tau=0$ or $q=\infty$. In contrast to the spaces $\MA(\rd)$ these scales are embedded into each other like their classical counterparts, that is,
\begin{equation}\label{elem-tau}
	B^{s,\tau}_{p,\min(p,q)}(\rd)\, \hookrightarrow \, \ft(\rd)\, \hookrightarrow \,B^{s,\tau}_{p,\max(p,q)}(\rd),
\end{equation}
whenever $0<p<\infty$, $0<q\leq\infty$, $s\in\real$, $\tau\geq 0$. 
Using these relations we can easily transfer our results to the new class of function spaces if $0\le \tau<\frac{1}{p}$. In particular, if $0\leq \tau<\frac{1}{p}$ and $s=(1-\tau p)\sigma_p$, then the following three conditions are equivalent
\begin{itemize}
\item[{\upshape\bfseries (i)}] $\ft(\rd) \hookrightarrow \Lloc(\rd) $,
\item[{\upshape\bfseries (ii)}] $\ft(\rd) \hookrightarrow \cM_{v,\max(p,1)}(\rd) $, where $v=\frac{p}{(1-\tau p)\min(p,1)}$, 
\item[{\upshape\bfseries (iii)}] either \ $p\geq 1$ and $q\leq 2$, or \ $0<p<1$. 
\end{itemize}
The above statement improves Theorem 3.8 in \cite{hms} and corrects some misprint concerning non-embeddings of $\ft(\rd)$ spaces stated in formula (3.31) there.
  
Moreover we have the following counterpart of Corollary~\ref{theorem-2-n}. If
\begin{equation}
  0\leq \tau<\frac1p, \quad \frac{p}{1-p\tau}<v<\infty, \quad s=d\left(\frac{1}{p} - \tau\right)- \frac{d}{v}, \quad  
  1\le q \leq v,\label{At-M-ass}
  \end{equation}
  then
 \begin{equation}
   F^{s,\tau}_{p,\infty}(\rd) \hookrightarrow \cM_{v,q}(\rd)\quad \iff \quad q\le v(1- p\tau). \label{Ft-M}
\end{equation}
This is a direct consequence of \eqref{Ft-E} and Corollary~\ref{theorem-2-n}. In case of spaces $\bt(\rd)$ we have a partial counterpart only at the moment: assume \eqref{At-M-ass} and $q \leq p$. Then
  \begin{equation}
B^{s,\tau}_{p,q}(\rd) \hookrightarrow \cM_{v,q}(\rd), \label{Bt-M}
 \end{equation}   
as by assumption, $q\leq p<v(1-p\tau)$; thus \eqref{Ft-M} together with \eqref{elem-tau} conclude the argument. In case of $q>p$ the situation is not yet complete: while the embedding \eqref{Bt-N} together with Corollary~\ref{theorem-2-n} always lead to $q\leq v(1-p\tau)$ whenever $\bt(\rd)\hookrightarrow \cM_{v,q}(\rd)$, the sufficiency is not clear in all cases: assume, in addition to \eqref{At-M-ass} that $p<q\leq \frac{v}{\tau v+1}$. Then using some Franke-Jawerth embedding, cf. \cite[Thm.~3.10]{YHMSY},
  \[
  \bt(\rd)\hookrightarrow F^{\sigma,\tau}_{q,\infty}(\rd),\quad \sigma = s-\frac{d}{p}+\frac{d}{q}<s,
  \]
  and, by \eqref{Ft-M} again,
  \[
  F^{\sigma,\tau}_{q,\infty}(\rd) \hookrightarrow \cM_{v,q}(\rd)\quad \text{if}\quad q\leq v(1-q\tau),
  \]
which is satisfied by our additional assumption on $q$, we get \eqref{Bt-M}.   
\end{remark}

\section{Embeddings with smoothness $\frac{d}{u}$}
In this section we are interested in embeddings of spaces with smoothness $s=\frac d u$. This is once more the borderline smoothness since if the smoothness is strictly bigger than $\frac d u$,  then the space consists of bounded functions. We describe the properties of the functions belonging to the spaces with smoothness $s=\frac d u$ in terms of exponential Orlicz-Morrey spaces and some  generalised Morrey spaces. 

Our approach is based on the extrapolation argument that in the case of  Besov and Triebel-Lizorkin spaces was elaborated by Triebel in \cite{Tri93}. We follow the general idea of his work.   The extrapolation inequalities we need are formulated in the following lemma.

\begin{lemma} \label{lemma1}
Let $0< p\le u<\infty$, $0<q\le \infty$, $p<r<\infty$ and $v=\frac{u r}{p}$. 
 \begin{itemize}
\item[{\upshape\bfseries (i)}] If $r\ge 1$, then  there is a constant $C>0$ depending on $d$, $p$, $u$ and $q$ but independent of $v$, $r$ such that for any $f\in \MBd(\rd)$ the following inequalities hold
\begin{align}\label{n1}
\|f|\cM_{v,r}(\rd)\| & \le C v^{1-\frac{1}{q}} \|f|\MBd(\rd)\| \qquad \text{if}\quad q\ge 1, \\
\|f|\cM_{v,r}(\rd)\| & \le C  \|f|\MBd(\rd)\| \qquad \text{if}\quad q\le 1. \label{n2}
\end{align}
\item[{\upshape\bfseries (ii)}] If $p<r< 1$, then  there is a constant $C>0$ depending on $d$, $p$, $u$  but independent of $v$ and $r$  such that for any $f\in \mathcal{N}^{\frac{d}{u}}_{u,p,\infty}(\rd)$ the following inequality holds
\begin{align}\label{n3}
\|f|\cM_{v,r}(\rd)\|  \le C  \|f|\mathcal{N}^{\frac{d}{u}}_{u,p,\infty}(\rd)\| . 
\end{align}
\end{itemize}
\end{lemma}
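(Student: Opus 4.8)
The plan is to combine the atomic decomposition of Proposition~\ref{atomicdecomposition} with a scale-by-scale estimate of the target Morrey quasi-norm, and then to sum over the scales while tracking the precise dependence on $v$ and $r$. Fix $f\in\MBd(\rd)$ and write $f=\sum_{j\ge0}\sum_{m}\lambda_{jm}a_{jm}$ with $\|\lambda\mid n^{\frac{d}{u}}_{u,p,q}(\rd)\|\le C\|f\mid\MBd(\rd)\|$. Since $s=\frac{d}{u}$, the dyadic weight in Definition~\ref{dts}(ii) disappears, so with $A_j:=\|\sum_m|\lambda_{jm}|\chi^{(u)}_{j,m}\mid\cM_{u,p}(\rd)\|$ one has $\|\lambda\mid n^{\frac d u}_{u,p,q}(\rd)\|=(\sum_j A_j^q)^{1/q}$. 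Setting $f_j:=\sum_m\lambda_{jm}a_{jm}$ and using only the crude size bound $|a_{jm}|\le c_2\chi_{c_1 Q_{j,m}}$ (no moment or derivative information is needed at critical smoothness), the bounded overlap of the dilated cubes yields $\|f_j\mid\cM_{v,r}(\rd)\|\le C_0\|g_j\mid\cM_{v,r}(\rd)\|$ with $g_j:=\sum_m|\lambda_{jm}|\chi_{j,m}$ and $C_0$ independent of $v,r$; the overlap multiplicity enters as a power $M_0^{r}$ inside an $L_r$-integral and is neutralised after taking the $r$-th root.

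The technical heart is the single-scale inequality $\|g_j\mid\cM_{v,r}(\rd)\|\le 2^{jd(\frac1u-\frac1v)}\|g_j\mid\cM_{u,p}(\rd)\|$, valid with constant $1$ uniformly in $v,r$. To prove it I would first record that, because the $\chi_{j,m}$ have pairwise disjoint supports, for any exponent $t$ and any dyadic cube $Q_{\nu,k}$ with $\nu\le j$ one has $|Q_{\nu,k}|^{t/v-1}\int_{Q_{\nu,k}}g_j^t=2^{-jd}\,2^{\nu d(1-t/v)}\sum_{Q_{j,m}\subset Q_{\nu,k}}|\lambda_{jm}|^t$, while cubes with $\nu>j$ never exceed the value at $\nu=j$. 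The decisive observation is the identity $1-\frac rv=1-\frac pu$ forced by $v=\frac{ur}p$, so the $\nu$-exponent is the \emph{same} for the pairs $(v,r)$ and $(u,p)$. Combining this with $\sum_m|\lambda_{jm}|^r\le(\max_m|\lambda_{jm}|)^{r-p}\sum_m|\lambda_{jm}|^p$ and the single-cube bound $\max_m|\lambda_{jm}|\le 2^{jd/u}\|g_j\mid\cM_{u,p}(\rd)\|$ (read off from the smallest admissible cubes) gives the claim after taking the $r$-th root. Altogether this produces the scale estimate $\|f_j\mid\cM_{v,r}(\rd)\|\le C_0\,2^{-jd/v}A_j$, the form that drives the summation.

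It remains to sum in $j$. For $r\ge1$ the space $\cM_{v,r}$ is a Banach space, hence $\|f\mid\cM_{v,r}(\rd)\|\le\sum_j\|f_j\mid\cM_{v,r}(\rd)\|\le C_0\sum_j 2^{-jd/v}A_j$. For $q>1$ Hölder produces the factor $(\sum_j 2^{-jdq'/v})^{1/q'}$, and since $\sum_{j\ge0}2^{-jdq'/v}=\frac1{1-2^{-dq'/v}}\sim\frac{v}{dq'\ln2}$ as $v\to\infty$ while staying bounded for $v$ near $u$, this is $\le C\,v^{1/q'}=C\,v^{1-1/q}$, yielding \eqref{n1} (the endpoint $q=1$ reduces to \eqref{n2}); for $q\le1$ the embedding $\ell^q\hookrightarrow\ell^1$ together with $2^{-jd/v}\le1$ gives \eqref{n2} with no $v$-factor. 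For $p<r<1$ (part (ii)) I would instead invoke the $r$-subadditivity $\|f\mid\cM_{v,r}(\rd)\|^r\le\sum_j\|f_j\mid\cM_{v,r}(\rd)\|^r\le C_0^r\sum_j 2^{-jdr/v}A_j^r$; here the crucial point is that $\frac{dr}v=\frac{dp}u$ is \emph{independent of $r$}, so with $q=\infty$ (whence $A_j\le\|\lambda\mid n^{\frac du}_{u,p,\infty}(\rd)\|$) the geometric sum equals the $v,r$-independent constant $(1-2^{-dp/u})^{-1}$, and taking the $r$-th root (bounded since $r>p$) yields \eqref{n3}.

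I expect the main obstacle to be precisely the single-scale inequality with a constant independent of $r$: the naive route loses an $r$-dependent overlap/triangle factor, and the estimate must be organised so that every such factor sits inside an $L_r$-integral and is cancelled by the final $r$-th root, while the exponent identity $1-\frac rv=1-\frac pu$ is what allows the $\cM_{v,r}$ quantity to be compared term by term with the $\cM_{u,p}$ one. A secondary point requiring care is confirming the sharp growth $v^{1-1/q}$ from the geometric series uniformly over all admissible $v$, which is what forces the $q$-dependence of the constant in \eqref{n1} and distinguishes it from \eqref{n2} and \eqref{n3}.
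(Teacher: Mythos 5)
Your proposal is correct, but it reaches the key single-scale estimate by a genuinely different route than the paper. The paper works with the Littlewood--Paley pieces $\mathcal{F}^{-1}(\varphi_j\mathcal{F}f)$ directly: it first shows $\sup_x|\psi(x)|\le C\|\psi\,|\,\cM_{u,p}(\rd)\|$ for band-limited $\psi$ via the Plancherel--Polya--Nikol'skii inequality and the boundedness of the Hardy--Littlewood maximal operator on Morrey spaces (applied with fixed exponents, which is where the uniformity in $v,r$ comes from), then combines this with the elementary interpolation $\bigl(\int_Q|\psi|^r\bigr)^{1/r}\le\bigl(\sup|\psi|\bigr)^{1-p/r}\bigl(\int_Q|\psi|^p\bigr)^{1/r}$ and a dilation argument to get $\|\mathcal{F}^{-1}(\varphi_j\mathcal{F}f)\,|\,\cM_{v,r}(\rd)\|\le C\,2^{jd(\frac1u-\frac1v)}\|\mathcal{F}^{-1}(\varphi_j\mathcal{F}f)\,|\,\cM_{u,p}(\rd)\|$. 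You instead invoke the atomic decomposition (Proposition~\ref{atomicdecomposition}), reduce each scale to the disjointly supported model function $g_j=\sum_m|\lambda_{jm}|\chi_{j,m}$, and prove the same scale estimate by a purely combinatorial computation: your discrete inequality $\sum_m|\lambda_{jm}|^r\le(\max_m|\lambda_{jm}|)^{r-p}\sum_m|\lambda_{jm}|^p$ together with the single-cube bound $\max_m|\lambda_{jm}|\le 2^{jd/u}\|g_j\,|\,\cM_{u,p}(\rd)\|$ is exactly the discrete analogue of the paper's sup-plus-interpolation step, with the exponent identity $\frac rv=\frac pu$ playing the same role in both arguments. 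What your route buys is elementarity and transparency of constants (the model single-scale inequality holds with constant $1$, and no maximal-function or Fourier-multiplier machinery is needed); what it costs is the overlap and covering bookkeeping for the dilated supports $c_1Q_{jm}$ versus dyadic cubes, where one must check, as you indicate, that every multiplicity factor appears to a power that is tamed by the final $r$-th root (bounded by its first power for $r\ge1$ and by its $\frac1p$-th power for $p<r<1$, which is admissible since the constant may depend on $p$). From the scale estimate $\|f_j\,|\,\cM_{v,r}(\rd)\|\le C\,2^{-jd/v}A_j$ onward, your summation (H\"older with exponent $q'$ and the geometric series $\sum_j2^{-jdq'/v}\sim v$ for $q>1$; $\ell^q\hra\ell^1$ for $q\le1$; $r$-subadditivity of $\|\cdot\|^r$ and $\frac{dr}{v}=\frac{dp}{u}$ for part (ii)) coincides with the paper's, including the source of the factor $v^{1-\frac1q}$ and of the $v,r$-independence in \eqref{n2} and \eqref{n3}. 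One minor point of rigor, shared equally by the paper's own proof: passing from the scale-wise bounds to $\|f\,|\,\cM_{v,r}(\rd)\|\le\sum_j\|f_j\,|\,\cM_{v,r}(\rd)\|$ uses completeness of $\cM_{v,r}(\rd)$ and the identification of the $\cM_{v,r}$-limit with the distributional limit, which follows from the polynomial-growth embedding $\cM_{v,r}(\rd)\hra\mathcal{S}'(\rd)$.
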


 \begin{remark}
The constant $C$ is independence of $v$ in that sense that it depends on $u$ and $p$ but takes the same value as far as the  the quotient $\frac{u}{p}$ is constant. 
\end{remark} 


\begin{proof}
First we prove that 
\begin{equation}
|\psi(x)|\le C \|\psi|\cM_{u,p}(\real^d)\| 
\end{equation}
if $\supp \mathcal{F}\psi\subset B(0,2)$ and $0<p\le u<\infty$. The proof is standard and based on the Plancherel-Polya-Nikol'skii   inequality  
\begin{equation}
\sup_{y\in \real^d} \frac{|\psi(x-y)|}{(1+|y|)^{d/\delta}} \le C \Big(M(|\psi|^\delta)(x)\Big)^\frac{1}{\delta}, \qquad x\in\real^d,\; \delta>0,
\end{equation}
cf. \cite[Theorem 1.3.1]{T-F1}. Here $M$ stands for the Hardy-Littlewood maximal operator, as usual.

We repeat the argument for completeness, cf. also \cite{nns}. 
One can easily prove that if $|x-y|\le 2$, then 
\begin{equation} 
|\psi(x)| \le c \sup_{z\in \real^d} \frac{|\psi(z)|}{(1+|z-y|)^{d/\delta}} \le C \Big(M(|\psi|^\delta)(y)\Big)^\frac{1}{\delta}.  
\end{equation}
So, if $\delta<p$, then the boundedness of the maximal function in Morrey spaces gives us
\begin{align}
|\psi(x)| &\le c \bigg( |B(x,2)|^{-1} \int_{B(x,2)} \Big(M(|\psi|^\delta)(y)\Big)^\frac{p}{\delta} \dint y\bigg)^{1/p} \nonumber\\
& \le C \left\|\Big(M(|\psi|^\delta)\Big)^\frac{1}{\delta}|\cM_{u,p}(\rd)\right\|
= C 
\left\|M(|\psi|^\delta)|\cM_{u/\delta,p/\delta}(\real^d)\right\|^{\frac{1}{\delta}} \nonumber \\ 
& \le C \left\||\psi|^\delta|\cM_{u/\delta,p/\delta}(\rd)\right\|^{\frac{1}{\delta}} =  C  \|\psi|\cM_{u,p}(\rd)\|, \qquad  x\in \rd.\nonumber 
\end{align}
Thus, if $p<r$, then for any cube $Q$ we have 
\begin{align}
\Big(\int_Q |\psi(x)|^r \dint x\Big)^\frac{1}{r} \le \sup_{x\in Q} |\psi(x)|^{1-\frac{p}{r}} \Big(\int_Q |\psi(x)|^p \dint x \Big)^\frac{1}{r} \le  
 C |Q|^{\frac{1}{r}-\frac{p}{ru}}
\|\psi|\cM_{u,p}(\rd)\|.  \nonumber
\end{align}
So 
\begin{equation}
\|\psi|\cM_{v,r}(\rd)\| \le C \|\psi|\cM_{u,p}(\rd)\|.
\end{equation}
The last inequality implies 
\begin{equation}
\|\mathcal{F}^{-1}\big(D_{2^j}\varphi_j \mathcal{F}f\big)| \cM_{v,r}(\real^d)\| \le C \|\mathcal{F}^{-1}\big(D_{2^j}(\varphi_j \mathcal{F}f)\big)|\cM_{u,p}(\rd)\|
\end{equation}
since $\supp D_{2^j}(\varphi_j \mathcal{F}f) \subset B(0,2)$, where  $D_\delta g(x) = g(\delta x)$. 
Now the formula for  Fourier dilations and the relation between dilations and the Morrey norms give us 
\begin{equation}
\|\mathcal{F}^{-1}\big(\varphi_j \mathcal{F}f\big)| \cM_{v,r}(\real^d)\| \le C 2^{jd(\frac{1}{u}-\frac{1}{v})} \|\mathcal{F}^{-1}\big(\varphi_j \mathcal{F}f\big)|\cM_{u,p}(\rd)\|.
\end{equation}
In consequence, if $1\le r$ and $1<q\le\infty$ we have 
\begin{align}
\|f|\cM_{v,r}(\real^d)\| & \le \sum_{k=0}^\infty \|\mathcal{F}^{-1}\varphi_k \mathcal{F}f| \cM_{v,r}(\rd)\| \nonumber
\\
& \le  C  \sum_{k=0}^\infty 2^{kd(\frac{1}{u}-\frac{1}{v})}\|\mathcal{F}^{-1}\varphi_k \mathcal{F}f| \cM_{u,p}(\rd)\|\nonumber\\
 & \le C \Big(\sum_{k=0}^\infty 2^{-kd\frac{q'}{v}}\Big)^\frac{1}{q'}\Big(\sum_{k=0}^\infty 2^{k\frac{d}{u}q}\|\mathcal{F}^{-1}\varphi_k \mathcal{F}f| \cM_{u,p}(\real^d)\|^q\Big)^\frac{1}{q} 
 \nonumber 
 \\  & \le C v^{1-\frac{1}{q}} \|f|\MBd(\rd)\|, \nonumber 
\end{align}
 with obvious changes if $q=\infty$; similarly,  
\[
 \|f|\cM_{v,r}(\real^d)\|  \le C \sum_{k=0}^\infty 2^{kd(\frac{1}{u}-\frac{1}{v})}\|\mathcal{F}^{-1}\varphi_k \mathcal{F}f| \cM_{u,p}(\real^d)\| \le C \|f|\MBd(\rd)\|
\]
if $1\le r$ and $0<q\le 1$. 

Now let  $p< r< 1$ and  $q=\infty$.  We have 
\begin{align}
\|f|\cM_{v,r}(\rd)\|^r & \le \sum_{k=0}^\infty \|\mathcal{F}^{-1}\varphi_k \mathcal{F}f| \cM_{v,r}(\rd)\|^r \nonumber \\
& \le  C\sum_{k=0}^\infty 2^{kdr(\frac{1}{u}-\frac{1}{v})}\|\mathcal{F}^{-1}\varphi_k \mathcal{F}f| \cM_{u,p}(\rd)\|^r  \nonumber \\
& \le C \Big(\sup_{k\in\nn_0}2^{k\frac{d}{u}}\|\mathcal{F}^{-1}\varphi_k \mathcal{F}f| \cM_{u,p}(\rd)\|\Big)^r\ \sum_{k=0}^\infty 2^{-kd\frac{r}{v}} .
\nonumber 
\end{align}
But 
\[
\Big(\sum_{k=0}^\infty 2^{-kd\frac{r}{v}} \Big)^\frac{1}{r} \le C_{u,p} <\infty, 
\]
so 
\begin{equation} \nonumber
\|f|\cM_{v,r}(\real^d)\|  \le C \|f|\mathcal{N}^{\frac{d}{u}}_{u,p,\infty}(\rd)\|.
\end{equation}
This concludes the argument.
\end{proof}

\subsection{Embeddings in Orlicz-Morrey spaces}

The Orlicz-Morrey spaces  considered below were introduced by Nakai \cite{nakai}. They are a generalisation of both Morrey and Orlicz spaces.

\begin{definition} Let $\Phi:[0,\infty) \rightarrow [0, \infty)$ be a Young function, i.e., a continuous convex function with $\Phi(0)=0$ and $\lim_{t\rightarrow \infty}\Phi(t)=\infty$. For $1\leq r<\infty$ and a cube $Q$ we put
\[
\|f\|_{(r,\Phi);Q}:= \inf \left\{  \lambda>0:  |Q|^{\frac{1}{r}-1}\int_Q\Phi\Bigl( \frac{|f(x)|}{\lambda}\Bigr)\dint x\leq 1 \right\} .
\]
The Orlicz-Morrey space 
$\cM_{r,\Phi}(\rd)$ is the set of all measurable functions $f$ such that 
\[\|f\vert \cM_{r,\Phi}(\rd)\|:= \sup_{Q\in\mathcal{Q}}\|f\|_{(r,\Phi);Q} < \infty . \]
\end{definition}

We consider also the following  expression
$$
\|f\vert \cM_{r,\Phi}(\rd)\|^*:=\inf \left\{  \lambda>0: \sup_{Q\in\mathcal{Q}} |Q|^{\frac{1}{r}-1}\int_Q\Phi\Bigl( \frac{|f(x)|}{\lambda}\Bigr)\dint x\leq 1 \right\}.
$$
Since $ \sup_Q\inf_\lambda \le \inf_\lambda\sup_Q$ we have $\|f\vert \cM_{r,\Phi}(\rd)\|\le \|f\vert \cM_{r,\Phi}(\rd)\|^*$. In the next lemma we  show that if the Young function $\Phi$ is of exponential type, then both   expressions are  equivalent and the space  $\cM_{r,\Phi}(\rd)$  can be characterised by extrapolation. 

\begin{lemma} \label{lemma-equiv-OM}
Let $0< p\le u<\infty$, $0<q < \infty$, and $\Phi_{p,q}(t):=t^p \exp ( t^q)$. Then $f$ belongs to the Orlicz-Morrey space $\cM_{\frac{u}{p},\Phi_{p,q}}(\rd)$ if, and only if,
$$
\sup_{j\geq 1}j^{-1/q}\, \|f\vert \cM_{v(j),p(j)}(\rd)\|<\infty,
$$
where $p(j)=p+jq$ and $v(j)=\frac{u}{p}p(j)$. Moreover, 
$$
\|f\vert \cM_{\frac{u}{p},\Phi_{p,q}}(\rd)\| \sim  \|f\vert \cM_{\frac{u}{p},\Phi_{p,q}}(\rd)\|^* \sim \sup_{j\geq 1}j^{-1/q}\, \|f\vert \cM_{v(j),p(j)}(\rd)\|. 
$$
\end{lemma}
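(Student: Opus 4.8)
The plan is to prove Lemma \ref{lemma-equiv-OM} by establishing the chain of equivalences
$$
\|f\vert \cM_{\frac{u}{p},\Phi_{p,q}}(\rd)\| \;\sim\; \|f\vert \cM_{\frac{u}{p},\Phi_{p,q}}(\rd)\|^* \;\sim\; \sup_{j\geq 1}j^{-1/q}\, \|f\vert \cM_{v(j),p(j)}(\rd)\|,
$$
where, throughout, I abbreviate $S:=\sup_{j\geq 1}j^{-1/q}\|f\vert \cM_{v(j),p(j)}(\rd)\|$. Since one inequality between the first two quantities is already recorded in the excerpt ($\|f\vert \cM_{r,\Phi}\|\le\|f\vert \cM_{r,\Phi}\|^*$), it suffices to show $S\lesssim\|f\vert \cM_{\frac up,\Phi_{p,q}}\|$ and $\|f\vert \cM_{\frac up,\Phi_{p,q}}\|^*\lesssim S$, which together with the trivial inequality closes the loop and simultaneously yields the stated ``if and only if''. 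The key structural fact I will exploit is the power-series expansion $\Phi_{p,q}(t)=t^p\exp(t^q)=\sum_{k=0}^{\infty}\frac{t^{p+kq}}{k!}$, which converts the single Orlicz integral into a sum of ordinary $L_{p(k)}$-type integrals with $p(k)=p+kq$ and, after the Morrey normalisation, into Morrey norms of level $v(k)=\frac up\,p(k)$.

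First I would prove $S\lesssim\|f\vert \cM_{\frac up,\Phi_{p,q}}\|$. Fix $j\ge1$ and a cube $Q$, and set $\lambda:=\|f\|_{(\frac up,\Phi_{p,q});Q}$ so that $|Q|^{\frac pu-1}\int_Q\Phi_{p,q}(|f|/\lambda)\dint x\le 1$. Keeping only the $k=j$ term of the series, I get
$$
\frac{1}{j!}\,|Q|^{\frac pu-1}\int_Q\Bigl(\frac{|f(x)|}{\lambda}\Bigr)^{p+jq}\dint x\;\le\;1,
$$
which rearranges to $|Q|^{\frac pu-1}\int_Q|f|^{p(j)}\le j!\,\lambda^{p(j)}$. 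Taking the $p(j)$-th root and matching the exponent $\frac{1}{p(j)}(\frac pu-1)$ against the Morrey normalisation exponent $\frac{1}{p(j)}-\frac{1}{v(j)}=\frac{1}{p(j)}(1-\frac pu)\cdot(-1)$—here I must verify the bookkeeping $\frac1u\cdot\frac{p}{p(j)}\cdot p(j)$ lines up, using $v(j)=\frac up p(j)$—gives $\|f\|_{(v(j),p(j));Q}\lesssim (j!)^{1/p(j)}\lambda$. Using Stirling's estimate $(j!)^{1/p(j)}\sim (j/e)^{j/p(j)}$ and the fact that $\frac{j}{p(j)}=\frac{j}{p+jq}\to\frac1q$, one checks $(j!)^{1/p(j)}\lesssim C\,j^{1/q}$ uniformly in $j$. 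Taking the supremum over $Q$ and dividing by $j^{1/q}$ yields $S\lesssim\|f\vert \cM_{\frac up,\Phi_{p,q}}\|$.

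The harder direction is $\|f\vert \cM_{\frac up,\Phi_{p,q}}\|^*\lesssim S$, and this is where I expect the main obstacle. Assume $S<\infty$ and set $\lambda:=A\,S$ for a large constant $A$ to be chosen. For each cube $Q$, I expand the whole series term by term:
$$
|Q|^{\frac pu-1}\int_Q\Phi_{p,q}\Bigl(\frac{|f|}{\lambda}\Bigr)\dint x
=\sum_{k=0}^{\infty}\frac{1}{k!}\,|Q|^{\frac pu-1}\int_Q\Bigl(\frac{|f|}{\lambda}\Bigr)^{p+kq}\dint x.
$$
For $k\ge1$ each term is controlled, via the Morrey definition and the exponent computation above, by $\frac{1}{k!}\lambda^{-p(k)}\bigl(\|f\vert\cM_{v(k),p(k)}\|\bigr)^{p(k)}\le\frac{1}{k!}(AS)^{-p(k)}(k^{1/q}S)^{p(k)}=\frac{k^{p(k)/q}}{k!\,A^{p(k)}}$; the $k=0$ term needs the separate bound $\|f\vert\cM_{\frac up,p}\|$, which I recover from $k=1$ by a Hölder/interpolation step or by subsuming it into the $j=1$ data. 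The crux is showing $\sum_{k\ge1}\frac{k^{p(k)/q}}{k!\,A^{p(k)}}\le1$ for $A$ large: since $\frac{k^{p(k)/q}}{k!}=\frac{k^{(p+kq)/q}}{k!}\sim k^{p/q}\frac{k^{k}}{k!}\sim k^{p/q}\frac{e^{k}}{\sqrt{k}}$ by Stirling, each term behaves like $k^{p/q-1/2}(e/A^{q})^{k}\cdot A^{-p}$, which is summable and made $\le1$ once $A^{q}>e$ and $A$ is enlarged—the delicate point being uniformity of these constants in $v$ and $r$, exactly as the Remark after the lemma demands. Assembling the two directions with the known inequality establishes all three equivalences and the characterisation.
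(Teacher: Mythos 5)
Your overall route is the same as the paper's: both arguments reduce the statement to the chain $\|f\vert \cM_{\frac up,\Phi_{p,q}}(\rd)\|\le\|f\vert \cM_{\frac up,\Phi_{p,q}}(\rd)\|^*$ plus the two one-sided inequalities (this is exactly \eqref{160319ls} in the paper), and both run on the expansion $\Phi_{p,q}(t)=\sum_{k\ge0}t^{p+kq}/k!$ combined with Stirling's formula. The differences are cosmetic: for $S\lesssim\|f\vert\cM_{\frac up,\Phi_{p,q}}(\rd)\|$ you extract the single term $k=j$ and use $(j!)^{1/p(j)}\lesssim j^{1/q}$ (correct, and your exponent bookkeeping $\frac1{v(j)}-\frac1{p(j)}=\frac1{p(j)}\bigl(\frac pu-1\bigr)$ checks out; the attainment of the Luxemburg infimum is justified by Fatou, or one inserts an $\varepsilon$ as the paper does), whereas the paper first derives the two-sided equivalence \eqref{equiv} with the auxiliary exponent $\kappa$ and reads off each term; for the converse you take $\lambda=AS$ with $A$ large where the paper chooses $\kappa<-1$; both lead to the same convergent series.

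There is, however, a genuine gap at exactly the point you flagged: the $k=0$ term. Your proposed repair via H\"older fails on large cubes. Indeed H\"older gives $\int_Q|f|^p\dint x\le |Q|^{1-\frac{p}{p(1)}}\bigl(\int_Q|f|^{p(1)}\dint x\bigr)^{p/p(1)}$, hence
\[
|Q|^{\frac pu-1}\int_Q|f(x)|^p\dint x\;\le\;\Bigl(|Q|^{\frac{p(1)}{u}-1}\int_Q|f(x)|^{p(1)}\dint x\Bigr)^{p/p(1)},
\]
while the $j=1$ datum in $S$ controls only $|Q|^{\frac pu-1}\int_Q|f|^{p(1)}\dint x$ (recall $p(1)/v(1)=p/u$); the deficit $|Q|^{q/u}$ is unbounded over large dyadic cubes. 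Equivalently, you would need $\cM_{v(1),p(1)}(\rd)\hookrightarrow\cM_{u,p}(\rd)$, which is false, since Morrey spaces on all of $\rd$ embed into one another only when the first indices coincide. Moreover the failure is essential, not technical: for $p<u$ one checks that $f(x)=(1+|x|)^{-d/v(1)}$ satisfies $\sup_{j\ge1}j^{-1/q}\|f\vert\cM_{v(j),p(j)}(\rd)\|<\infty$, yet $\|f\vert\cM_{u,p}(\rd)\|=\infty$; since $\Phi_{p,q}(t)\ge t^p$ forces $\cM_{\frac up,\Phi_{p,q}}(\rd)\subset\cM_{u,p}(\rd)$, no argument whatsoever can recover the $k=0$ term from the $j\ge1$ data alone. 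The clean repair is to include $j=0$ in the supremum (i.e.\ to add the quantity $\|f\vert\cM_{u,p}(\rd)\|$, with which your scheme closes verbatim: the $k=0$ term is then $\le A^{-p}$), or to restrict the supremum to cubes with $|Q|\le1$. In fairness, the paper's own proof shares this blind spot: the displays \eqref{equiv} and $\sum_{j=0}^{\infty}j^{\kappa}$ are meaningless at $j=0$, and the term $\lambda^{-p}\int_Q|f|^p\dint x$ is never estimated there; this is harmless in the paper's applications only because one has $\|f\vert\cM_{u,p}(\rd)\|\lesssim\|f\vert\MBd(\rd)\|$ separately for the functions to which the lemma is applied.
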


\begin{proof} 
\emph{Step 1}. It is sufficient to prove that there are constants $c,C>0$ such that for any measurable function $f$ we have 
\begin{equation}\label{160319ls}
c \|f\vert \cM_{\frac{u}{p},\Phi_{p,q}}(\rd)\|^* \le  \sup_{j\geq 1}j^{-1/q}\, \|f\vert \cM_{v(j),p(j)}(\rd)\| \le C  \|f\vert \cM_{\frac{u}{p},\Phi_{p,q}}(\rd)\|. 
\end{equation}

Consider a dyadic cube $Q$ and $\lambda>0$.
Using the Taylor expansion of $\Phi_{p,q}$ and the Stirling's formula, we have
\begin{align}
\int_Q\Phi_{p,q} \Bigl( \frac{|f(x)|}{\lambda}\Bigr)\dint x & =\sum_{j=0}^{\infty} \frac{1}{j !} \int_Q \frac{|f(x)|^{p+jq}}{\lambda^{p+jq}} \dint x \nonumber \\
&= \sum_{j=0}^{\infty} j^{-j} e^j (2\pi j)^{-1/2} \lambda^{-(p+jq)}\int_Q |f(x)|^{p+jq} \dint x . \nonumber
\end{align}
Let $\kappa \in\real$ (be at our disposal) and let
$$
\lambda_j:=(2\pi)^{\frac{1}{2(p+qj)}}  e^{-\frac{j}{p+qj}} j^{(\kappa-\frac{p}{q}+\frac{1}{2})/(p+qj)}, \quad j\in\nn .
$$
It can be easily seen that  the sequence $(\lambda_j)_j$ converges to $e^{-1/q}$, thus there are positive constants $c_0,c_1$ such that $0<c_0<\lambda_j<c_1<\infty$  for any $j\in \nn$. Therefore,
\begin{equation} \label{equiv}
\int_Q\Phi_{p,q}\Bigl( \frac{|f(x)|}{\lambda}\Bigr)\dint x \sim  \sum_{j=0}^{\infty} j^{\kappa-\frac{p}{q}-j}  \lambda^{-(p+jq)}\int_Q |f(x)|^{p+jq} \dint x .
\end{equation}
\smallskip

\emph{Step 2}.  We prove the left-hand side inequality in \eqref{160319ls}. Assume that
$$
\sup_{j\geq 1}j^{-1/q}\, \|f\vert \cM_{v(j),p(j)}(\rd)\|\leq \lambda.
$$
Then, for any dyadic cube $Q$ and any $j\in\nn$, it holds
$$
\lambda^{-(p+qj)} j^{-\frac{p+qj}{q}} |Q|^{\frac{p}{u}-1 }\int_Q |f(x)|^{p(j)} \dint x \leq 1.
$$
For this $\lambda$, inserting the above inequality in \eqref{equiv} entails
$$
\int_Q\Phi_{p,q}\Bigl( \frac{|f(x)|}{\lambda}\Bigr)\dint x \leq c \sum_{j=0}^{\infty} j^{\kappa} |Q|^{1-\frac{p}{u}}.
$$
By choosing $\kappa<-1$, we conclude that 
$$
\sup_{Q\in {\cal Q}}   |Q|^{\frac{p}{u}-1} \int_Q\Phi_{p,q}\Bigl( \frac{|f(x)|}{\lambda}\Bigr)\dint x \leq c.
$$

\emph{Step 3}.  It remains to show the right-hand side inequality in  \eqref{160319ls}. Let now
\[
\|f\vert \cM_{r,\Phi}(\rd)\|:= \sup_{Q\in\mathcal{Q}}\|f\|_{(r,\Phi);Q} \le 1
\]
and let $\varepsilon>0$. Then for any dyadic cube $Q$ there is $\lambda_Q$, $0<\lambda_Q\le \|f\|_{(r,\Phi);Q}+\varepsilon$,  such that 
$$
|Q|^{\frac{p}{u}-1} \int_Q\Phi_{p,q}\Bigl( \frac{|f(x)|}{\lambda_Q}\Bigr)\dint x \leq 1 .
$$
Then, by \eqref{equiv}, 
$$
 \sum_{j=0}^{\infty} j^{\kappa-\frac{p}{q}-j}  \lambda_Q^{-(p+jq)}  |Q|^{\frac{p(j)}{v(j)}-1} \int_Q |f(x)|^{p+jq} \dint x \leq c
$$ 
for all dyadic cubes $Q$ and for some positive constant c independent of $Q$. Hence, for any $j\in\nn$ and any dyadic cube $Q$, it holds
$$
j^{-\frac{p}{q}-j}    |Q|^{\frac{p(j)}{v(j)}-1} \int_Q |f(x)|^{p+jq} \dint x \leq c j^{-\kappa} \lambda_Q^{p+jq}\le c j^{-\kappa} (\|f\|_{(r,\Phi);Q}+\varepsilon )^{p+jq}.
$$
Taking the infimum over $\varepsilon$ we get 
$$
j^{-\frac{p}{q}-j}    |Q|^{\frac{p(j)}{v(j)}-1} \int_Q |f(x)|^{p+jq} \dint x \leq  c j^{-\kappa} \|f\|_{(r,\Phi);Q}^{p+jq}
$$
and afterwards taking the supremum over all dyadic cubes gives 
$$
j^{-1/q}  \| f\vert \cM_{v(j),p(j)}(\rd)\| \leq c^{1/p(j)} j^{-\kappa/p(j)} \|f\vert \cM_{r,\Phi}(\rd)\|.
$$
Now we take the supremum over all $j$. The expression on the right-hand side is of the size $j^{1/j}$ so it can be  bounded by a positive constant. This   yields 
$$\sup_{j\geq 1} j^{-1/q}  \| f\vert \cM_{v(j),p(j)}(\rd)\| \leq c \|f\vert \cM_{r,\Phi}(\rd)\|.
$$
\end{proof}


\begin{theorem}\label{emb-N-in-MO}
Let $0< p\le u<\infty$ and  $1<q \leq  \infty$. Then 
$$
\cN^{\frac{d}{u}}_{u,p,q}(\rd) \hookrightarrow \cM_{\frac{u}{p},\Phi_{p,q'}}(\rd),
$$
where  $q'$ is the conjugate exponent of $q$.
\end{theorem}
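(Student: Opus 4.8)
The plan is to combine the extrapolation inequalities from Lemma~\ref{lemma1} with the Orlicz-Morrey characterisation by extrapolation from Lemma~\ref{lemma-equiv-OM}. Since $q>1$, we have $q'<\infty$, so the target Young function $\Phi_{p,q'}(t)=t^p\exp(t^{q'})$ is of the admissible exponential type, and Lemma~\ref{lemma-equiv-OM} tells us that membership in $\cM_{\frac{u}{p},\Phi_{p,q'}}(\rd)$ is equivalent to the quantitative bound
\[
\sup_{j\ge 1}j^{-1/q'}\,\|f\mid\cM_{v(j),p(j)}(\rd)\|<\infty,
\qquad p(j)=p+jq',\quad v(j)=\tfrac{u}{p}\,p(j).
\]
So the whole theorem reduces to estimating this supremum by $\|f\mid\MBd(\rd)\|$, which is exactly what Lemma~\ref{lemma1}(i) is designed to deliver, after matching the indices.

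First I would set $r=r(j)=\frac{p(j)}{p}=1+\frac{jq'}{p}$, so that $v(j)=\frac{u\,r(j)}{p}$ agrees with the relation $v=\frac{ur}{p}$ in Lemma~\ref{lemma1}; note $r(j)\ge 1$ for all $j\ge 1$, placing us in case (i) of the lemma. Applying \eqref{n1} (the constant $C$ there being independent of $v,r$, i.e.\ of $j$) gives
\[
\|f\mid\cM_{v(j),p(j)}(\rd)\|\le C\,v(j)^{1-1/q}\,\|f\mid\MBd(\rd)\|.
\]
Multiplying by $j^{-1/q'}=j^{-(1-1/q)}$ and using $v(j)=\frac{u}{p}(p+jq')\sim j$ as $j\to\infty$, the factor $v(j)^{1-1/q}\,j^{-(1-1/q)}$ stays bounded uniformly in $j$; taking the supremum over $j\ge 1$ then yields $\sup_{j\ge1}j^{-1/q'}\|f\mid\cM_{v(j),p(j)}(\rd)\|\le C\|f\mid\MBd(\rd)\|$, and Lemma~\ref{lemma-equiv-OM} finishes the proof for $1<q<\infty$. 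The case $q=\infty$ (so $q'=1$) is handled the same way, now invoking \eqref{n2} in place of \eqref{n1}, which removes the $v$-power altogether and matches $j^{-1/q'}=j^{-1}$ trivially from above.

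The step I expect to be the main obstacle is the careful bookkeeping of the exponents so that the power of $v(j)$ produced by the extrapolation inequality is \emph{exactly} cancelled (up to a bounded factor) by the normalising weight $j^{-1/q'}$ coming from Lemma~\ref{lemma-equiv-OM}. This is precisely where the conjugacy $\frac1q+\frac1{q'}=1$ enters: the exponent $1-\frac1q$ in \eqref{n1} equals $\frac1{q'}$, which is why the Young function must be built with $q'$ rather than $q$. Everything else is a routine matching of indices and an appeal to the uniformity of the constant $C$ in Lemma~\ref{lemma1}; the only mild care needed is that $v(j)$ grows linearly in $j$ while the weight is a pure power of $j$, so one should confirm the ratio is bounded rather than merely convergent.
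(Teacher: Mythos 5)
Your overall strategy is exactly the paper's: reduce the theorem via Lemma~\ref{lemma-equiv-OM} to the bound $\sup_{j\ge1} j^{-1/q'}\,\|f\mid \cM_{v(j),p(j)}(\rd)\| \le C\,\|f\mid\MBd(\rd)\|$, and obtain that bound from the extrapolation inequality of Lemma~\ref{lemma1}. However, your application of Lemma~\ref{lemma1} is mis-parametrised. You set $r(j)=p(j)/p$ and claim $v(j)=\frac{u\,r(j)}{p}$; but $\frac{u\,r(j)}{p}=\frac{u\,p(j)}{p^2}$, which equals $v(j)=\frac{u}{p}p(j)$ only when $p=1$. Moreover, even granting that relation, Lemma~\ref{lemma1} with this $r$ would bound the norm of $f$ in $\cM_{ur(j)/p,\,r(j)}(\rd)$, i.e.\ a Morrey space whose second index is $p(j)/p$, not the norm in $\cM_{v(j),p(j)}(\rd)$ that Lemma~\ref{lemma-equiv-OM} requires; so the displayed inequality you want does not follow from the step as written. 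The correct choice --- the one the paper makes --- is simply $r=p(j)=p+jq'$: then $p<r$, $r\ge1$ (since $q'\ge1$ and $j\ge 1$), and $v=\frac{ur}{p}=\frac{u}{p}p(j)=v(j)$, so \eqref{n1} yields precisely $\|f\mid\cM_{v(j),p(j)}(\rd)\|\le C\,v(j)^{1-1/q}\|f\mid\MBd(\rd)\|$ with $C$ independent of $j$, and the cancellation $v(j)^{1/q'}j^{-1/q'}\sim 1$ that you describe then finishes the argument for $1<q<\infty$.

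Your treatment of $q=\infty$ is also invalid: inequality \eqref{n2} holds only for $q\le1$, where $q$ is the third index of the source space $\MBd(\rd)=\cN^{\frac{d}{u}}_{u,p,q}(\rd)$ appearing in the lemma; it cannot be invoked for $f\in\cN^{\frac{d}{u}}_{u,p,\infty}(\rd)$, which is a strictly larger space than $\cN^{\frac{d}{u}}_{u,p,q}(\rd)$ with $q\le1$. No separate argument is needed anyway: \eqref{n1} covers $q=\infty$ (with the convention $1-\frac1q=1=\frac1{q'}$), so the factor $v(j)^{1}$ is cancelled by the weight $j^{-1}$ exactly as in the finite-$q$ case. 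With these two repairs your proof coincides with the paper's.
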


\begin{proof}
For each $j\in\nn$, by Lemma \ref{lemma1} with $r=p(j)=p+qj$ and $v(j)=\frac{u}{p}p(j)$, 
$$
\|f|\cM_{v(j),p(j)}(\rd)\|  \le c \{v(j)\}^{1/q'} \|f|\MBd(\rd)\|,
$$
where $c$ is a positive constant independent of $v(j)$ and $p(j)$, and thus of $j$.
Since $\{v(j)\}^{1/q'} \sim j^{1/q'}$, using also Lemma \ref{lemma-equiv-OM}, we get
$$
\|f\vert \cM_{\frac{u}{p},\Phi_{p,q'}}(\rd)\| \sim \sup_{j\geq 1}j^{-1/q'}\, \|f\vert \cM_{v(j),p(j)}(\rd)\| \leq c  \|f|\MBd(\rd)\|.
$$
\end{proof}

\begin{corollary} \label{emb-E-in-MO}
Let $0< p\le u<\infty$ and $0<q\leq \infty$. Then 
$$
\cE^{\frac{d}{u}}_{u,p,q}(\rd) \hookrightarrow \cM_{\frac{u}{p},\Phi_{p,1}}(\rd).
$$
\end{corollary}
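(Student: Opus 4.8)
The goal is to prove the Triebel--Lizorkin--Morrey counterpart
$$
\cE^{\frac{d}{u}}_{u,p,q}(\rd) \hookrightarrow \cM_{\frac{u}{p},\Phi_{p,1}}(\rd)
$$
of Theorem~\ref{emb-N-in-MO}. The plan is to reduce everything to the Besov--Morrey case already settled, exploiting the elementary embedding \eqref{elem} together with the monotonicity of the target Orlicz--Morrey scale in the exponential parameter. The Young function $\Phi_{p,1}(t)=t^p\exp(t)$ is the weakest exponential growth in the family $\Phi_{p,q'}$, so I expect the whole corollary to follow once the embedding is placed inside the $\cN$-framework.

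First I would recall Sawano's sandwich \eqref{elem}: for $0<p<u<\infty$ one has $\cE^{d/u}_{u,p,q}(\rd)\hookrightarrow \cN^{d/u}_{u,p,\infty}(\rd)$, with the case $u=p$ reducing to the classical Triebel--Lizorkin situation handled separately (or absorbed since then $\cE=\cN=F$ up to the $q$-index). Next, Theorem~\ref{emb-N-in-MO} applied with $q=\infty$ gives
$$
\cN^{\frac{d}{u}}_{u,p,\infty}(\rd)\hookrightarrow \cM_{\frac{u}{p},\Phi_{p,1}}(\rd),
$$
since the conjugate exponent of $q=\infty$ is $q'=1$, producing exactly the Young function $\Phi_{p,1}$. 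Chaining the two embeddings yields the claim for every $q\in(0,\infty]$, because the source space $\cE^{d/u}_{u,p,q}$ only gets larger as $q$ increases and is in all cases contained in $\cN^{d/u}_{u,p,\infty}$. Thus the corollary is immediate from \eqref{elem} and the endpoint case of the preceding theorem.

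The one point that deserves care is the borderline $q\le 1$ range, where one should double-check that Lemma~\ref{lemma1}(ii) (the regime $p<r<1$, inequality \eqref{n3}) is what underlies the $q=\infty$ instance of Theorem~\ref{emb-N-in-MO}, so that the constant in the extrapolation remains uniform in $j$ and the supremum over $j\ge 1$ in Lemma~\ref{lemma-equiv-OM} stays finite. Concretely, for those $j$ with $p(j)=p+j<1$ one invokes \eqref{n3} and for $p(j)\ge 1$ one invokes \eqref{n2}; in either case the bound is $\|f\mid\cM_{v(j),p(j)}\|\le C\|f\mid\cN^{d/u}_{u,p,\infty}\|$ with $C$ independent of $j$, which is precisely the hypothesis needed to conclude $\sup_{j\ge1}j^{-1}\|f\mid\cM_{v(j),p(j)}\|<\infty$ via Lemma~\ref{lemma-equiv-OM} with $q'=1$.

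The main obstacle is therefore not analytic but bookkeeping: ensuring that the $q=\infty$ endpoint of Theorem~\ref{emb-N-in-MO} genuinely covers the full range $0<p\le u<\infty$ including $p<r<1$, which is exactly what Lemma~\ref{lemma1}(ii) was designed to supply. Once that uniformity is in hand, no further estimate is required and the proof reduces to the two-line chain
$$
\cE^{\frac{d}{u}}_{u,p,q}(\rd)\hookrightarrow \cN^{\frac{d}{u}}_{u,p,\infty}(\rd)\hookrightarrow \cM_{\frac{u}{p},\Phi_{p,1}}(\rd).
$$
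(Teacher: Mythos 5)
Your proof is correct and is essentially identical to the paper's own argument: the paper proves the corollary by exactly the chain $\cE^{\frac{d}{u}}_{u,p,q}(\rd)\hookrightarrow \cN^{\frac{d}{u}}_{u,p,\infty}(\rd)\hookrightarrow \cM_{\frac{u}{p},\Phi_{p,1}}(\rd)$, i.e.\ the elementary embedding \eqref{elem} followed by Theorem~\ref{emb-N-in-MO} at $q=\infty$, $q'=1$. One small correction to your side remark: the precaution about Lemma~\ref{lemma1}(ii) is vacuous here, since with $q'=1$ the extrapolation in Lemma~\ref{lemma-equiv-OM} runs over $r=p(j)=p+j\geq p+1>1$, so only part (i) of Lemma~\ref{lemma1} is ever invoked; moreover that part gives $\|f\mid\cM_{v(j),p(j)}(\rd)\|\leq C\,v(j)\,\|f\mid\cN^{\frac{d}{u}}_{u,p,\infty}(\rd)\|$ with $v(j)\sim j$, \emph{not} a $j$-uniform bound as you claim --- it is the weight $j^{-1/q'}=j^{-1}$ in Lemma~\ref{lemma-equiv-OM} that absorbs this growth. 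Since your proof uses Theorem~\ref{emb-N-in-MO} as a black box, this confusion about its internals does not affect the validity of your conclusion.
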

\begin{proof}
The result follows from the above theorem and elementary embeddings:
$$
\cE^{\frac{d}{u}}_{u,p,\infty}(\rd) \hookrightarrow \cN^{\frac{d}{u}}_{u,p,\infty}(\rd) \hookrightarrow \cM_{\frac{u}{p},\Phi_{p,1}}(\rd).
$$
\end{proof}

\begin{remark} According to   \cite[Cor.~1.5]{sw} it holds
$$
\cE^{\frac{d}{u}}_{u,p,2}(\rd) \hookrightarrow \cM_{\frac{u}{p},\Phi_{p}}(\rd),
$$ 
where $1<p\leq u<\infty$ and 
$$
\Phi_p(t):=\sum_{j=j_p}^{\infty}\frac{t^j}{j!}, \quad t\geq 0,
$$
 with $j_p:=\min\{j\in\nn: j\geq p\}$. Since there exists a constant $c>0$ such that 
 $$
\Phi_{p,1}(t) \geq c \Phi_p(t) \quad \text{for all} \quad t\geq 0, 
 $$
 it turns out that 
 $$
\cM_{\frac{u}{p},\Phi_{p,1}}(\rd) \hookrightarrow \cM_{\frac{u}{p},\Phi_{p}}(\rd).
 $$
Hence Corollary \ref{emb-E-in-MO} does not only extend \cite[Cor.~1.5]{sw}  from $q=2$ to any $1<q\leq \infty$, it improves it.
\end{remark}

\subsection{Embeddings in generalised Morrey spaces}

Now we turn to the generalised Morrey spaces. The spaces  were extensively studied, cf. Nakai \cite{nakai2}, Nakamura, Noi and Sawano \cite{nns}, Sawano and Wadade \cite{sw} and the references given there.   Let $0 < r < \infty$ 
and  let $\varphi:[0,\infty)\rightarrow [0,\infty)$ be a suitable function. For a locally $r$-integrable function $f$ we put
\begin{equation}\label{def-gen-morrey}
\|f|\cM_r^{\varphi}\|  :=\, \sup_{Q\in\mathcal{Q}} \varphi(|Q|)
\biggl( |Q|^{-1}\int_{Q} |f(y)|^r \dint y \biggr)^{1/r}\, .
\end{equation}

The space $\cM_r^{\varphi}(\rd)$ is the set of all measurable functions $f$ for which the quasi-norm \eqref{def-gen-morrey} is finite. If $\varphi(t)=t^{1/u}$, $0<r\le u$, then  the definition coincides with the definition of the Morrey space $\cM_{u,r}(\rd)$.  
Since we will work with the given examples of functions $\varphi$ we avoid the discussions which functions $\varphi$ define the reasonable spaces, and we refer the interested reader to the above mentioned papers.  

We start with a proposition that somehow compares the Orlicz-Morrey spaces with generalised Morrey spaces we will use.  

\begin{proposition}\label{MO-gm}
Let $0< p\le u<\infty$,   $1\le q <  \infty$  and $r\ge p+q$. Then there is a positive constant $C$ depending on $u$, $p$,  $q$ and $r$ such that the following inequality
\[
\left(\frac{1}{|Q|}\int_Q|f(x)|^{r}\dint x\right)^{\frac{1}{r}}\leq c \,(1+|Q|)^{-\frac{p}{ru}} \left(\log \left(e+|Q|^{-1}\right)\right)^{\frac{1}{q}} 
\|f\vert \cM_{\frac{u}{p},\Phi_{p,q}}(\rd)\|  
\]
holds for all dyadic cubes $Q$ and all $f\in\cM_{\frac{u}{p},\Phi_{p,q}}(\rd)$.
\end{proposition}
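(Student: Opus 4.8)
The plan is to reduce the claim to a single-cube statement about the Luxemburg functional and then estimate the $L_r$-average through the distribution function of $f$ on $Q$. First I would fix $f\neq 0$, abbreviate $A:=\|f\mid\cM_{\frac{u}{p},\Phi_{p,q}}(\rd)\|$ and set $g:=|f|/A$. Since $A$ is the supremum over all dyadic cubes of the local functionals $\|f\|_{(u/p,\Phi_{p,q});Q}$, the defining inequality of each such functional, together with the monotonicity of $\Phi_{p,q}$, gives for every dyadic cube $Q$
\[
|Q|^{\frac{p}{u}-1}\int_Q \Phi_{p,q}(g(x))\dint x\le 1,\qquad\text{i.e.}\qquad \int_Q g^{p}e^{g^{q}}\dint x\le |Q|^{1-\frac{p}{u}}.
\]
From here Chebyshev's inequality and the trivial bound yield the two-sided control of the distribution function,
\[
\bigl|\{x\in Q:g(x)>s\}\bigr|\le\min\Bigl(|Q|,\ \frac{|Q|^{1-\frac{p}{u}}}{s^{p}e^{s^{q}}}\Bigr),\qquad s>0.
\]

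Next I would write $\int_Q g^{r}\dint x=\int_0^\infty r s^{r-1}\,|\{g>s\}|\dint s$ and split at the threshold $S:=\bigl(\log(e+|Q|^{-1})\bigr)^{1/q}$, which satisfies $S\ge 1$ for every $Q$ and is the natural scale at which the weight $e^{s^q}$ turns $|Q|^{1-p/u}$ into a power of $|Q|$. For the low part $\int_0^S$ I would use whichever of the two bounds on the distribution function is better: for small cubes ($|Q|\le 1$) the trivial bound gives $\le|Q|S^{r}=|Q|(\log(e+|Q|^{-1}))^{r/q}$, while for large cubes ($|Q|>1$, so $S$ is bounded) the estimate $\int_{\{g\le S\}}g^{r}\le S^{r-p}\int_Q g^{p}\le S^{r-p}|Q|^{1-p/u}$ gives $\le c\,|Q|^{1-p/u}$; in both cases this is dominated by $c\,|Q|(1+|Q|)^{-p/u}(\log(e+|Q|^{-1}))^{r/q}$. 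For the high part $\int_S^\infty$ I would insert the second bound and reduce to the convergent tail integral $|Q|^{1-p/u}\int_S^\infty r s^{r-1-p}e^{-s^{q}}\dint s$; estimating this incomplete-Gamma-type integral by $c\,(1+S)^{r-p-q}e^{-S^{q}}$ and using $e^{-S^{q}}=(e+|Q|^{-1})^{-1}\le\min(|Q|,e^{-1})$ again produces, after separating the two regimes, a contribution bounded by $c\,|Q|(1+|Q|)^{-p/u}(\log(e+|Q|^{-1}))^{r/q}$.

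Adding the two parts, dividing by $|Q|$, taking $r$-th roots and recalling $g=|f|/A$ yields the asserted inequality. The hard part will be the bookkeeping forced by the two competing regimes hidden in the factor $(1+|Q|)^{-p/u}(\log(e+|Q|^{-1}))^{1/q}$: for shrinking cubes it is the logarithmic growth (coming from exponential integrability) that must be captured and the bound is linear in $|Q|$, whereas for expanding cubes it is the Morrey-type decay $|Q|^{-p/u}$ that dominates and the logarithm is harmless. Making the single threshold $S$ serve both regimes, and in particular retaining the sharp exponential decay $e^{-S^{q}}$ (rather than a lossy $e^{-S^{q}/2}$) in the tail so that the estimate survives when $p/u$ is close to $1$, is exactly where the assumptions $r\ge p+q$ (so that $(r-p-q)/q\ge 0$) and $p\le u$ (so that $1-p/u\ge 0$) enter.
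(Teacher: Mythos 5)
Your proposal is correct, but it takes a genuinely different route from the paper's. The paper deduces the proposition from the extrapolation characterisation of Lemma~\ref{lemma-equiv-OM}, namely $\|f\vert \cM_{\frac{u}{p},\Phi_{p,q}}(\rd)\| \sim \sup_{j\geq 1}j^{-1/q}\, \|f\vert \cM_{v(j),p(j)}(\rd)\|$ with $p(j)=p+jq$, $v(j)=\tfrac{u}{p}p(j)$: it reduces to $r=p(j_0)$, splits the dyadic cubes into three regimes by side length ($j\le 0$, $0<j<j_0$, $j\ge j_0$), and for small cubes applies H\"older's inequality with the integrability exponent $p(j)$ matched to the scale $j$ of the cube, so that $|Q|^{-p/(u\,p(j))}$ stays bounded while the factor $j^{1/q}\sim \bigl(\log(e+|Q|^{-1})\bigr)^{1/q}$ produces the logarithm. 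You instead work directly from the Luxemburg-type definition: the uniform modular bound $\int_Q\Phi_{p,q}(|f(x)|/A)\dint x\le |Q|^{1-\frac{p}{u}}$ (legitimate, since the modular is non-increasing in $\lambda$ and one passes to the infimum by monotone convergence), then Chebyshev, the layer-cake formula split at $S=\bigl(\log(e+|Q|^{-1})\bigr)^{1/q}$, and an incomplete-Gamma tail bound $\int_S^\infty s^{r-1-p}e^{-s^q}\dint s\le C(1+S)^{r-p-q}e^{-S^q}$ combined with $e^{-S^q}=(e+|Q|^{-1})^{-1}\le\min(|Q|,e^{-1})$; all steps check out and recombine to the stated inequality. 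What each approach buys: the paper's argument is very short because Lemma~\ref{lemma-equiv-OM} is already in place (and is needed anyway for Theorem~\ref{emb-N-in-MO}), and it stays within the Morrey-scale extrapolation framework of the section; yours is self-contained, avoids the Taylor/Stirling series manipulations behind that lemma, and makes the origin of the logarithm (exponential integrability via the distribution function) transparent. One remark: your final claim that $r\ge p+q$ is ``exactly where'' that hypothesis enters is not quite accurate---your argument does not really need it, since for $p<r<p+q$ the exponent $(r-p)/q$ is in $(0,1)$ and the tail integral is then bounded by $S^{r-p-q}e^{-S^q}\le e^{-S^q}$, which is even better; so your method in fact covers smaller $r$ as well, a range the paper only reaches later, in Theorem~\ref{theorem-MB}, by a separate argument.
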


\begin{proof}
  It should be clear that it is sufficient to consider the case $r=p+j_0q$ for some $j_0\in \nn$. Note that this refers to $r=p(j_0)$ in the notation of Lemma~\ref{lemma-equiv-OM}.
  
  Let $Q$ be a  dyadic cube with side length $2^{-j}$ and $|Q|=2^{-jd}$, $j\in\zz$.  
Assume first $j\leq 0$. Then by Lemma  \ref{lemma-equiv-OM} we get 
\begin{align}
\left(\frac{1}{|Q|} \int_Q|f(x)|^{r}\dint x\right)^{\frac{1}{r}} & \leq C \,j_0^{1/q} |Q|^{-\frac{p}{ur}} \|f\vert \cM_{\frac{u}{p},\Phi_{p,q}}(\rd)\| \nonumber \\
 & \leq C (1+|Q|)^{-\frac{p}{ur}} \|f\vert \cM_{\frac{u}{p},\Phi_{p,q}}(\rd)\| . \label{or-gm1}
\end{align}

Next we assume that $j\ge j_0$. Then H\"older's inequality and Lemma~\ref{lemma-equiv-OM} imply 
\begin{align}
\left(\frac{1}{|Q|} \int_Q|f(x)|^{r}\dint x\right)^{\frac{1}{r}} & \leq  \left(\frac{1}{|Q|} \int_Q|f(x)|^{p(j)}\dint x\right)^{\frac{1}{p(j)}} \nonumber \\ 
 & \le
C \,j^{1/q} |Q|^{-\frac{p}{u p(j)}} \|f\vert \cM_{\frac{u}{p},\Phi_{p,q}}(\rd)\| \nonumber \\ 
& \leq C \left(\log(e+|Q|^{-1})\right)^{\frac{1}{q}}  \|f\vert \cM_{\frac{u}{p},\Phi_{p,q}}(\rd)\| \label{or-gm2}
\end{align}
since 
\[
 \log(e+|Q|^{-1}) \sim j \quad \text{and}\quad  |Q|^{-\frac{p}{u p(j)}}= 2^{\frac{jdp}{u (p+ jq)}} \le 2^{\frac{dp}{u q}} . 
\]

At the end we consider the cubes with  $0<j < j_0$. We have
\begin{align*}
 2^{-\frac{j_0 d}{v(j_0)}} & \sup_{Q:\; |Q|=2^{-jd},\;0<j < j_0}\left(\frac{1}{|Q|} \int_Q|f(x)|^{r}\dint x\right)^{\frac{1}{r}} \\ 
&\leq \ \sup_{Q} 
|Q|^{\frac{1}{v(j_0)}} \left(\frac{1}{|Q|} \int_Q|f(x)|^{r}\dint x\right)^{\frac{1}{r}} 
\le C \,j_0^{\frac1q}\  \|f\vert \cM_{\frac{u}{p},\Phi_{p,q}}(\rd)\|,
\end{align*}
by Lemma~\ref{lemma-equiv-OM}, recall $r=p(j_0)$. Thus
\begin{equation}\label{or-gm3}
\left(\frac{1}{|Q|} \int_Q|f(x)|^{r}\dint x\right)^{\frac{1}{r}}  \leq C \left( \log(e+|Q|^{-1})\right)^{\frac{1}{q}}  \|f\vert \cM_{\frac{u}{p},\Phi_{p,q}}(\rd)\| .
\end{equation}
Consequently, the inequalities \eqref{or-gm1}-\eqref{or-gm3} prove the proposition.
\end{proof}

The next corollary is an immediate consequence of Theorem~\ref{emb-N-in-MO},  Corollary~\ref{emb-E-in-MO} and Proposition~\ref{MO-gm}. 

\begin{corollary}
Let $0< p\le u<\infty$ and  $0< q \leq  \infty$.
\begin{itemize}
\item[{\upshape\bfseries (i)}]
If  $1< q \leq  \infty$   and $r\ge p+q'$ . Then there is a positive constant $C$ 
such that the  inequality   
\begin{equation}\label{gm-N1} 
\left(\frac{1}{|Q|}\int_Q|f(x)|^{r}\dint x\right)^{\frac{1}{r}}\leq c \,(1+|Q|)^{-\frac{p}{ru}} \left(\log \left(e+|Q|^{-1}\right)\right)^{\frac{1}{q'}} \|f|\MBd(\real^d)\|
\end{equation}
holds for all dyadic cubes $Q$ and all $f\in\MBd(\rd)$.
\item[{\upshape\bfseries (ii)}]
If  $ 0 < q \leq  \infty$   and $r\ge p + 1$ . Then there is a positive constant $C$  
such that the  inequality   
\begin{equation}\label{gm-E1}
\left(\frac{1}{|Q|}\int_Q|f(x)|^{r}\dint x\right)^{\frac{1}{r}}\leq c \,(1+|Q|)^{-\frac{p}{ru}} \left(\log \left(e+|Q|^{-1}\right)\right) \|f|\MFd(\real^d)\|
\end{equation}
holds for all dyadic cubes $Q$ and all $f\in\MFd(\rd)$. 
\end{itemize}
\end{corollary}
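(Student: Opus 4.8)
The plan is to read both inequalities as a two-step composition: first map the smoothness--Morrey space into a suitable exponential Orlicz--Morrey space using the embeddings already established, and then convert membership in that Orlicz--Morrey space into the claimed averaged $L_r$ bound by invoking Proposition~\ref{MO-gm}. Nothing beyond these two inputs is needed; the work lies entirely in lining up the parameters so that the hypotheses of Proposition~\ref{MO-gm} are met.

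For part (i) I would start from an arbitrary $f\in\MBd(\rd)$ with $1<q\le\infty$ and apply Theorem~\ref{emb-N-in-MO}, which gives $f\in\cM_{\frac up,\Phi_{p,q'}}(\rd)$ together with $\|f\vert\cM_{\frac up,\Phi_{p,q'}}(\rd)\|\lesssim\|f\vert\MBd(\rd)\|$, where $q'$ is the conjugate exponent. The second step is Proposition~\ref{MO-gm} applied with its generic exponent $q$ taken to be $q'$. Here one checks the ranges: the condition $1<q\le\infty$ forces $1\le q'<\infty$, exactly the admissible range in Proposition~\ref{MO-gm}; its requirement $r\ge p+q$ becomes $r\ge p+q'$, which is the hypothesis in the corollary; and the log-exponent $\tfrac1q$ turns into $\tfrac1{q'}$, matching \eqref{gm-N1}. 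Chaining the two estimates and absorbing constants finishes this part.

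For part (ii) the route is the same but starts from Corollary~\ref{emb-E-in-MO}: for $0<q\le\infty$ it yields $f\in\cM_{\frac up,\Phi_{p,1}}(\rd)$ with $\|f\vert\cM_{\frac up,\Phi_{p,1}}(\rd)\|\lesssim\|f\vert\MFd(\rd)\|$. I would then apply Proposition~\ref{MO-gm} with its generic exponent equal to $1$: the range $1\le q<\infty$ holds trivially, the condition $r\ge p+q$ specialises to $r\ge p+1$, and the log-exponent $\tfrac1q$ becomes $1$, which is precisely \eqref{gm-E1}.

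I do not expect a genuine obstacle, since both steps are quotations of results proved above; the only point demanding attention is the bookkeeping just described, in particular remembering that the second index of the exponential Young function $\Phi_{p,\cdot}$ produced by the embedding ($q'$ in part (i), $1$ in part (ii)) is the quantity that must be substituted for the free parameter $q$ of Proposition~\ref{MO-gm}, and verifying that this substitution keeps the admissible range of $r$ and the logarithmic exponent consistent with the stated claims.
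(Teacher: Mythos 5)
Your proposal is correct and is exactly the paper's argument: the authors state that the corollary is an immediate consequence of Theorem~\ref{emb-N-in-MO}, Corollary~\ref{emb-E-in-MO} and Proposition~\ref{MO-gm}, which is precisely the two-step composition you describe, with the same parameter bookkeeping (substituting $q'$, respectively $1$, for the free exponent in Proposition~\ref{MO-gm}).
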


The inequalities \eqref{gm-N1} and \eqref{gm-E1} can be extended to the smallest values of $r$ and $q$.  
Recall that $q'$ is defined by $q'=\frac{q}{q-1}$ if $1<q<\infty$ and $q'=\infty$ if $0<q\le 1$, where the usual convention $1/\infty=0$ is assumed.

\begin{theorem} \label{theorem-MB}
Let $0< p\le u<\infty$ and $0<q\le \infty$. If $1\leq r<\infty$, then  there exists a positive constant $c$ such that 
\[
\left(\frac{1}{|Q|}\int_Q|f(x)|^{r}\dint x\right)^{\frac{1}{r}}\leq c \,(1+|Q|)^{-\frac{\min(1,\frac{p}{r})}{u  }}   \left(\log \left(e+|Q|^{-1}\right)\right)^{\frac{1}{q'}} \|f|\MBd(\real^d)\|
\]
holds for all dyadic cubes $Q$ and all $f\in\MBd(\rd)$.  
\end{theorem}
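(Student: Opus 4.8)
The plan is to reduce the estimate to the extrapolation inequalities of Lemma~\ref{lemma1} and to inequality~\eqref{gm-N1} of the preceding corollary, by splitting the dyadic cubes according to their size. The guiding observation is that on each regime only one of the two factors on the right-hand side is genuinely active: on \emph{large} cubes $|Q|\ge 1$ one has $\log(e+|Q|^{-1})\sim 1$, so only the power $(1+|Q|)^{-\min(1,p/r)/u}\sim|Q|^{-\min(1,p/r)/u}$ matters, whereas on \emph{small} cubes $|Q|<1$ one has $(1+|Q|)^{-\min(1,p/r)/u}\sim 1$, so only the logarithm matters. I would treat the two regimes separately; the finitely many transition cubes $|Q|\sim 1$ are harmless and absorbed into the constant.

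\emph{Large cubes.} Here I want $\bigl(\frac{1}{|Q|}\int_Q|f|^r\bigr)^{1/r}\le c\,|Q|^{-\min(1,p/r)/u}\,\|f|\MBd(\rd)\|$. If $r>p$, then $\min(1,p/r)=p/r$ and Lemma~\ref{lemma1}(i), applied at the fixed exponent $r$ with $v=ur/p$, gives at once $\bigl(\frac{1}{|Q|}\int_Q|f|^r\bigr)^{1/r}\le c\,|Q|^{-p/(ur)}\,\|f|\MBd(\rd)\|$ for every cube, the $v$-dependent factor being a harmless constant for fixed $r,q$. If $1\le r\le p$ (which forces $p\ge 1$), then $\min(1,p/r)=1$; here I would first raise the exponent to $p$ by the power-mean (Jensen) inequality for the probability measure $|Q|^{-1}\dint x$, and then invoke the embedding $\MBd(\rd)\hookrightarrow\cM_{u,p}(\rd)$, which follows by combining the elementary embedding in smoothness (lowering $d/u$ to $0$) with the coincidence \eqref{mazzu} when $p>1$, and with Theorem~\ref{theorem-n_290319} (via Proposition~\ref{prop-Eu12}) when $p=1$. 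This produces the factor $|Q|^{-1/u}$.

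\emph{Small cubes.} Now I need $\bigl(\frac{1}{|Q|}\int_Q|f|^r\bigr)^{1/r}\le c\,\bigl(\log(e+|Q|^{-1})\bigr)^{1/q'}\,\|f|\MBd(\rd)\|$. For $q>1$ I would raise the exponent from $r$ to $\tilde r:=p+q'\ge r$ by the power-mean inequality and apply \eqref{gm-N1} at $\tilde r$; since $(1+|Q|)^{-p/(\tilde r u)}\sim 1$ on small cubes, precisely the logarithmic factor $\bigl(\log(e+|Q|^{-1})\bigr)^{1/q'}$ survives. For $0<q\le 1$ one has $q'=\infty$, so the claim is the log-free bound $\le c\,\|f|\MBd(\rd)\|$; here I would raise the exponent from $r$ to a \emph{cube-dependent} exponent $R=R(Q)\sim\log(|Q|^{-1})$ with $R\ge\max(r,p,1)$, again by the power-mean inequality, and then use inequality~\eqref{n2} of Lemma~\ref{lemma1}, whose constant is independent of $R$ (equivalently of $v=uR/p$). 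With this choice $|Q|^{-p/(uR)}=2^{jdp/(uR)}$ stays bounded as $|Q|=2^{-jd}\to 0$, so no logarithm is generated.

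The technical heart is this last step. One cannot simply monotonize in $r$ throughout, since raising the exponent spoils the sharp power decay required on the large cubes; hence the two cube-regimes must be decoupled. Moreover, the disappearance of the logarithm for $q\le 1$ is genuinely tied to the $R$-independence of the constant in \eqref{n2}, that is, to extrapolation with unboundedly growing integrability $R$; an Orlicz--Morrey detour through Proposition~\ref{MO-gm} would here leave a spurious logarithmic factor. Everything else reduces to the power-mean inequality and to tracking the $\sim 1$ factors, so the remaining work is to make the uniformity claims precise and to check the transition cubes.
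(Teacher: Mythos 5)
Your argument is correct in substance and shares the paper's overall architecture (split the cubes by size, handle large cubes by Morrey embeddings, obtain the small-cube bound by extrapolation in the integrability exponent), but two of your steps take genuinely different routes. For small cubes with $q>1$ you invoke \eqref{gm-N1}, i.e.\ you pass through the Orlicz--Morrey results (Theorem \ref{emb-N-in-MO} and Proposition \ref{MO-gm}); the paper instead runs the extrapolation directly and uniformly in $q$: in its Step 1 it applies Lemma \ref{lemma1} with the cube-dependent choice $v_0=\log(|Q|^{-1})$, $r_0=\frac{p}{u}v_0$, followed by H\"older's inequality, so that $|Q|^{-1/v_0}=e$ and the factor $v_0^{1/q'}$ produces exactly the logarithm --- the same device you reserve for $q\le 1$ via \eqref{n2}. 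Both routes are legitimate (the corollary containing \eqref{gm-N1} precedes the theorem, so there is no circularity); yours is shorter where it cites more, the paper's is self-contained modulo Lemma \ref{lemma1}, and your closing observation that the Orlicz--Morrey detour cannot work for $q\le1$ is precisely why the direct extrapolation is unavoidable there. For large cubes with $r>p$ you apply Lemma \ref{lemma1} at the fixed exponent $r$, whereas the paper (Step 2) uses the chain $\MBd(\rd)\hookrightarrow\cN^0_{\frac{ur}{p},r,1}(\rd)\hookrightarrow\cE^0_{\frac{ur}{p},r,2}(\rd)=\cM_{\frac{ur}{p},r}(\rd)$, together with Theorem \ref{theorem-n_290319} when $r=1$; both yield the power $|Q|^{-\frac{p}{ur}}$ for \emph{all} cubes, which in particular disposes of your ``transition'' cubes. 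Your case $1\le r\le p$ (power mean plus $\MBd(\rd)\hookrightarrow\cM_{u,p}(\rd)$, with Theorem \ref{theorem-n_290319} covering $p=1$) coincides with the paper's Step 3.

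One slip needs repair: you set $\tilde r:=p+q'$ and assert $\tilde r\ge r$, which is false in general (take $r$ large and $p$, $q'$ moderate). The fix is immediate: put $\tilde r:=\max(r,\,p+q')$; the power-mean inequality still raises the exponent from $r$ to $\tilde r$, and \eqref{gm-N1} applies at $\tilde r$ since $\tilde r\ge p+q'$. With this correction, and with your stated convention $R(Q)\ge\max(r,p,1)$ for the $q\le1$ case (which keeps $|Q|^{-p/(uR)}$ bounded on every scale), the proof goes through.
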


\begin{proof}
\emph{Step 1}. Given $r\geq 1$, let $r_0$ be such that $r_0>\max(p,r)$.  By Lemma \ref{lemma1},  there exists a positive constant $c$, not depending on $r_0$ and $v_0$, with $v_0=\frac{ur_0}{p}$, such that 
the inequality
\begin{equation}\label{main-ineq}
|Q|^{\frac{1}{v_0}-\frac{1}{r_0}}\left(\int_Q|f(x)|^{r_0}\dint x\right)^{\frac{1}{r_0}}\leq c\, v_0^{\frac{1}{q'}}  \|f|\MBd(\real^d)\|
\end{equation}
holds for all dyadic cubes $Q$ and all $f\in\MBd(\real^d)$.
The  H\"older inequality and \eqref{main-ineq} yield
 \begin{align} 
\left(\int_Q|f(x)|^{r}\dint x\right)^{\frac{1}{r}} &\leq |Q|^{\frac{1}{r}-\frac{1}{r_0}} \left(\int_Q|f(x)|^{r_0}\dint x\right)^{\frac{1}{r_0}}  \nonumber \\
&\leq  c\, |Q|^{\frac{1}{r}-\frac{1}{v_0}} \, v_0^{\frac{1}{q'}}  \,   \|f|\MBd(\real^d)\|  \label{thmMB:a}
\end{align}
for all dyadic cubes $Q$ and all $f\in\MBd(\real^d)$.

For convenience we deal with the case $q\geq 1$, the other case is even easier.  
Assume first that the cubes are small, that is, they satisfy $|Q|< e^{-u\max(1,\frac{r}{p})}$. Then $v_0=\log (|Q|^{-1})$ and $r_0=\frac{p}{u} v_0$ satisfy the above assumptions. Hence \eqref{thmMB:a} leads to
\begin{equation}\label{thmMB:b}
\left(\frac{1}{|Q|}\int_Q|f(x)|^{r}\dint x\right)^{\frac{1}{r}}\leq c\,   \left(\log \left(e+ |Q|^{-1}\right)\right)^{\frac{1}{q'}} \|f|\MBd(\real^d)\|,
\end{equation}
for any small enough cube $Q$ with $|Q|< e^{-u\max(1,\frac{r}{p})}$, and all $f\in\MBd(\real^d)$. It remains to deal with the bigger cubes.\\

\emph{Step 2}.  Let $r\geq p$. Elementary embeddings and \cite[Thm.~3.3]{hs12} yield
\[
\MBd(\rd) \hookrightarrow  \cN^0_{\frac{ur}{p},r,1} (\rd) \hookrightarrow  \cE^0_{\frac{ur}{p},r,2} (\rd) =\cM_{\frac{ur}{p},r} (\rd), \quad \text{if}\quad r>1,
\]
and 
\[
\MBd(\rd) \hookrightarrow  \cN^0_{\frac{u}{p},1,1} (\rd) \hookrightarrow \cM_{\frac{u}{p},1} (\rd), \quad \text{if}\quad r=1,
\]
where the last embedding is due to Theorem~\ref{theorem-n_290319}.
Therefore, for $r\geq 1$  and $r\geq p$, we have 
\begin{equation}\label{thmMB:c}
\left(\frac{1}{|Q|}\int_Q|f(x)|^{r}\dint x\right)^{\frac{1}{r}}\leq c\,  |Q|^{-\frac{p}{ur}} \|f|\MBd(\real^d)\|,
\end{equation}
for all dyadic cubes $Q$ and all $f\in\MBd(\real^d)$. Together with Step~1 this completes the argument in case of $r\geq p$.\\

\emph{Step 3}.   Let $1\leq r<p$. Using H\"older's inequality we obtain
\begin{align}
\left(\int_Q|f(x)|^{r}\dint x\right)^{\frac{1}{r}} & \leq |Q|^{\frac{1}{r}-\frac{1}{p}} \left(\int_Q|f(x)|^{p}\dint x\right)^{\frac{1}{p}}
 \leq |Q|^{\frac{1}{r}-\frac{1}{u}}    \|f|\cM_{u,p}(\real^d)\|   \nonumber \\
& \leq c  |Q|^{\frac{1}{r}-\frac{1}{u}}    \|f|\MBd(\real^d)\|,   \label{thmMB:d}
\end{align}
for all dyadic cubes $Q$ and all $f\in\MBd(\real^d)$, where in the last step we used the fact that 
\[
 \cN^{\frac{d}{u}}_{u,p,\infty} (\rd) \hookrightarrow  \cN^0_{u,p,1} (\rd) \hookrightarrow  \cE^0_{u,p,2} (\rd) =\cM_{u,p} (\rd).
 \]
Again the final outcome in this case follows from Step~1 and  \eqref{thmMB:d}.
\end{proof}

\begin{remark} \label{rmk-MB}
(i) In terms of embeddings in generalised Morrey spaces,  what has been proved could be stated as follows. Let $0< p\le u<\infty$,  $0<q\le \infty$, and $1\leq r<\infty$.  Then 
\[
\cN^{\frac{d}{u}}_{u,p,q} (\rd) \hookrightarrow \cM_r^{\varphi_{r,q}}(\rd)
\]
where
\begin{equation} \label{phi-r}
\varphi_{r,q}(t)=
\begin{cases}
\big(\log(t^{-1})\big)^{-\frac{1}{q'}} & \text{if} \quad 0<t< e^{-\frac{u}{d}\max(1,\frac{r}{p})}, \\
t^{\frac{d}{u}\min(1,\frac{p}{r})} & \text{if} \quad t \geq  e^{-\frac{u}{d}\max(1,\frac{r}{p})} .
\end{cases}
\end{equation}
(ii) If we would consider local generalised Morrey spaces $L\cM_p^{\varphi}(\rd)$, where the supremum taken in the definition of the norm is restricted to cubes with volume less or equal than 1 (cf. \cite[page 7]{t14}), then we can state that
\[
\cN^{\frac{d}{u}}_{u,p,q} (\rd) \hookrightarrow L\cM_r^{\varphi}(\rd), \qquad \varphi(t)=|\log(t)|^{-\frac{1}{q'}}, \quad 1\leq r<\infty.
\]
\end{remark}

\begin{corollary}
Let $0< p\le u<\infty$ and $0<q\le \infty$. If $1\leq r<\infty$, then  there exists a positive constant $c$ such that 
\[
\left(\frac{1}{|Q|}\int_Q|f(x)|^{r}\dint x\right)^{\frac{1}{r}}\leq c \,(1+|Q|)^{-\frac{\min(1,\frac{p}{r})}{u  }}   \log \left(e+|Q|^{-1}\right) \|f|\MFd(\real^d)\|
\]
holds for all dyadic cubes $Q$ and all $f\in\MFd(\real^d)$. 
\end{corollary}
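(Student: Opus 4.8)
The plan is to recognise this corollary as the Triebel-Lizorkin-Morrey counterpart of Theorem~\ref{theorem-MB}: the two statements have the identical polynomial prefactor $(1+|Q|)^{-\min(1,p/r)/u}$, and they differ only in the power of the logarithm, which here is $1$ rather than $(\log(e+|Q|^{-1}))^{1/q'}$. The natural route is therefore not to redo the extrapolation argument, but to reduce the $\cE$-statement to the already established $\cN$-statement by means of the Sawano embedding \eqref{elem} that forgets the fine index. At the critical smoothness $s=\frac{d}{u}$ this embedding reads $\MFd(\rd)\hookrightarrow \mathcal{N}^{d/u}_{u,p,\infty}(\rd)$.

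Concretely, I would first record the continuous embedding
\[
\MFd(\rd)\hookrightarrow \mathcal{N}^{d/u}_{u,p,\infty}(\rd),
\]
which for $p<u$ is exactly the right-hand inclusion in \eqref{elem} and for $p=u$ is the classical embedding $F^{d/p}_{p,q}(\rd)\hookrightarrow B^{d/p}_{p,\infty}(\rd)$; hence $\|f\mid \mathcal{N}^{d/u}_{u,p,\infty}(\rd)\|\le c\,\|f\mid\MFd(\rd)\|$ with $c$ independent of $f$. Next I would apply Theorem~\ref{theorem-MB} with fine index $q=\infty$ to the space $\mathcal{N}^{d/u}_{u,p,\infty}(\rd)$. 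Since $q'=1$ when $q=\infty$, the factor $\bigl(\log(e+|Q|^{-1})\bigr)^{1/q'}$ appearing there collapses to $\log(e+|Q|^{-1})$ to the first power, while the prefactor $(1+|Q|)^{-\min(1,p/r)/u}$ is untouched. Chaining the two displayed estimates then gives, for every dyadic cube $Q$ and every $f\in\MFd(\rd)$,
\[
\left(\frac{1}{|Q|}\int_Q|f(x)|^{r}\dint x\right)^{\frac{1}{r}}
\le c\,(1+|Q|)^{-\frac{\min(1,p/r)}{u}}\,\log\!\left(e+|Q|^{-1}\right)\,\|f\mid\MFd(\rd)\|,
\]
which is the assertion.

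I do not expect a genuine obstacle here; the content is entirely carried by Theorem~\ref{theorem-MB} and \eqref{elem}, and what remains is bookkeeping. The only two points warranting care are, first, that the constant furnished by Theorem~\ref{theorem-MB} may depend on $p,u,q,r$ but is independent of $Q$ and $f$, which is precisely what the corollary requires; and second, that \eqref{elem} is stated only for $0<p<u<\infty$, so the degenerate case $p=u$ must be covered separately by the classical Triebel-Lizorkin into Besov embedding, as indicated above.
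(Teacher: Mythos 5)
Your proof is correct and follows essentially the same route as the paper: the paper likewise deduces the corollary directly from Theorem~\ref{theorem-MB} via the embedding $\MFd(\rd)\hookrightarrow \cN^{\frac{d}{u}}_{u,p,\infty}(\rd)$, with $q=\infty$ giving $q'=1$ and hence the first power of the logarithm. Your explicit handling of the case $p=u$ via the classical $F\hookrightarrow B$ embedding is a small point of care that the paper leaves implicit.
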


\begin{proof} The outcome is a direct consequence of Theorem \ref{theorem-MB} taking into account the embedding
\[
\MFd (\rd) \hookrightarrow \cN^{\frac{d}{u}}_{u,p,\infty} (\rd).
\]

\end{proof}

\begin{remark} (i) As in Remark \ref{rmk-MB}, we can state the following: Let $0< p\le u<\infty$,  $0<q\le \infty$, and $1\leq r<\infty$.  Then 
\[
\cE^{\frac{d}{u}}_{u,p,q} (\rd) \hookrightarrow \cM_r^{\varphi_r}(\rd) \qquad \text{and} \qquad \cE^{\frac{d}{u}}_{u,p,q} (\rd) \hookrightarrow L\cM_r^{\varphi}(\rd)
\]
with $\varphi_r=\varphi_{r,\infty}$ given by \eqref{phi-r} and $\varphi(t)=|\log(t)|^{-1}$.

(ii) In the particular case of $p>1$, $q=2$ and $r=1$, the result in the above corollary coincides with Theorem~5.1 of \cite{sw}.

(iii) In the particular case of $p=1$, $q=2$ and $r=1$, the result in the above corollary is comparable with Proposition~1.7 of \cite{EGNS14}.

\end{remark}

\section{Further embeddings into spaces with smoothness $s=0$}

In the preceding subsection we dealt with (limiting) embeddings of spaces $\MA$ when $s=\frac{d}{u}$, into spaces of Orlicz-Morrey type or generalised Morrey type. In Corollary~\ref{theorem-2-n} the parallel setting was studied for embeddings into Morrey spaces $\mathcal{M}_{v,q}(\rd)$ when $s=\frac{d}{u}-\frac{d}{v}$. For convenience we briefly recall the forerunners, that is, when $\MA(\rd)$ is embedded into classical Lebesgue spaces $L_r(\rd)$, and into the space $C(\rd)$ of bounded, uniformly continuous functions. We also consider the target space $\bmo(\real^d)$, i.e.,
the local (non-homogeneous) space of functions of bounded mean oscillation, consisting of all locally integrable
functions $\ f\in \Lloc(\real^d) $ satisfying that
\begin{equation*}
 \left\| f \right\|_{\bmo(\rd)}:=
\sup_{|Q|\leq 1}\; \frac{1}{|Q|} \int\limits_Q |f(x)-f_Q| \dint x + \sup_{|Q|>
1}\; \frac{1}{|Q|} \int\limits_Q |f(x)| \dint x<\fz,
\end{equation*}
where $ Q $ appearing in the above definition runs over all cubes in $\real^d$, and $ f_Q $ denotes the mean value of $ f $ with
respect to $ Q$, namely, $ f_Q := \frac{1}{|Q|} \;\int_Q f(x)\dint x$.

Most of these results have been obtained in different papers before, we recall it for completeness and to simplify the comparison with our new findings presented above. 
\begin{corollary}\label{embtobottomline}
  Let $0 <p\leq u<\infty$, $q\in(0,\infty]$ and $s\in \rr$.
    \begin{itemize}
\item[{\upshape\bfseries (i)}] Then 
\begin{equation*}
\MB(\rd) \hookrightarrow C(\rd)\quad \iff\quad 
\begin{cases} s>\frac{d}{u}, & \text{or}\\ s=\frac{d}{u}& \text{and}\quad 
 q\leq 1, 
\end{cases}
\end{equation*}
and
\begin{equation*}
  \MF(\rd) \hookrightarrow C(\rd)\quad \iff\quad \begin{cases} s>\frac{d}{u}, & \text{or}\\ s=\frac{d}{u}&
\text{and}\quad u=p\leq 1.
\end{cases}
\end{equation*}
Here $C(\rd)$ can be replaced by $L_\infty(\rd)$.
\item[{\upshape\bfseries (ii)}]
Then 
  \begin{equation}\label{bmo1}
\MA(\rd)\hookrightarrow \bmo(\rd)\qquad\text{if, and only if,}\qquad s\geq\frac{d}{u}.
\end{equation}
\item[{\upshape\bfseries (iii)}]
  Let $1\leq r<\infty$. If $p<u$, then $\MA(\rd)$ is never embedded into $L_r(\rd)$. If $p=u$, then
  \begin{equation*}
    \mathcal{N}^s_{u,u,q}(\rd)\hookrightarrow L_r(\rd) \iff\ r\geq u\quad\text{and}\quad \begin{cases}
s>\frac{d}{u}-\frac{d}{r},& \text{or}\\ s=\frac{d}{u}-\frac{d}{r}& \text{and}\quad q\leq r,  
\end{cases}
\end{equation*}
and
\begin{equation*}
  \mathcal{E}^s_{u,u,q}(\rd) \hookrightarrow L_r(\rd)  \iff\ r\geq u\quad\text{and}\quad \begin{cases}
 s\geq \frac{d}{u}-\frac{d}{r}& \text{and}\quad s>0,\ \text{or}\\
s=\frac{d}{u}-\frac{d}{r}=0 & \text{and}\quad q\leq 2.
\end{cases}
\end{equation*}
    \end{itemize}
  \end{corollary}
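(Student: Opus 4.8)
The plan is to treat the three parts of Corollary~\ref{embtobottomline} separately, leaning on the critical embeddings already proved in this paper and on the classical diagonal case $u=p$. For part (i) the two chains of equivalences are exactly the boundedness results of our earlier papers \cite{HaSM3,hms}, which I would cite. For completeness I would recall the mechanism: writing $f^{(j)}:=\mathcal F^{-1}(\varphi_j\mathcal Ff)$, the Plancherel--P\'olya--Nikol'skij estimate used in the proof of Lemma~\ref{lemma1} gives $\|f^{(j)}\mid L_\infty\|\le C\,2^{jd/u}\|f^{(j)}\mid\M\|$. Summing over $j$ and using $\ell_q\hookrightarrow\ell_1$ when $q\le1$ yields $\MB(\rd)\hookrightarrow L_\infty(\rd)$ at $s=d/u$ with $q\le1$, while H\"older in $j$ gives it for every $q$ when $s>d/u$; uniform convergence of the series then places the sum in $C(\rd)$. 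The $\mathcal E$-case follows from the $\mathcal N$-case through \eqref{elem} when $p<u$, and reduces to the classical $F^{d/p}_{p,q}\hookrightarrow L_\infty\Leftrightarrow p\le1$ when $u=p$. The necessity of the stated restrictions (in particular the failure of boundedness for $\MF$ as soon as $p<u$) is the content of \cite{HaSM3,hms}, proved by superposing atoms concentrated at one point over all dyadic scales.

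For part (ii) I would first reduce the sufficiency to a single embedding. Since $\MA(\rd)\hookrightarrow\mathcal{N}^{s}_{u,p,\infty}(\rd)$ always holds (by \eqref{elem-1-t} in the $\mathcal N$-scale and by \eqref{elem} in the $\mathcal E$-scale) and embeddings compose, it suffices to prove $\mathcal{N}^{d/u}_{u,p,\infty}(\rd)\hookrightarrow\bmo(\rd)$; the case $s>d/u$ is then immediate from part (i) and $L_\infty\hookrightarrow\bmo$. For the large cubes $|Q|>1$ the required bound $|Q|^{-1}\int_Q|f|\le c\,\|f\mid\mathcal{N}^{d/u}_{u,p,\infty}\|$ is precisely Theorem~\ref{theorem-MB} with $r=1$, where the logarithmic and polynomial factors are bounded. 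For the small cubes $|Q|\le1$ of side $2^{-\nu}$ I would pass to the Fefferman--Stein square-function characterization $\bmo=F^0_{\infty,2}$ (cf.\ \cite{T-F1}) and establish
\[
\frac1{|Q|}\int_Q|f^{(j)}(x)|^2\dint x\le c\,2^{-c'(j-\nu)}\,\|f\mid\mathcal{N}^{d/u}_{u,p,\infty}\|^2,\qquad j\ge\nu,
\]
obtained by interpolating the Morrey bound $\int_Q|f^{(j)}|^p\le|Q|^{1-p/u}\|f^{(j)}\mid\M\|^p$ with $\|f^{(j)}\|_\infty\le c\,2^{jd/u}\|f^{(j)}\mid\M\|$ and using $2^{jd/u}\|f^{(j)}\mid\M\|\le\|f\mid\mathcal{N}^{d/u}_{u,p,\infty}\|$; summing the geometric series in $j-\nu$ controls the local square function uniformly in $Q$. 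The necessity $s\ge d/u$ I would obtain by testing with a single rescaled atom: a coefficient $\lambda$ placed at $(j_0,0)$ has $\|\lambda\mid a^s_{u,p,q}\|\sim 2^{j_0(s-d/u)}|\lambda|$ in both sequence spaces, whereas the corresponding bump has $\bmo$-norm $\sim|\lambda|$, so letting $j_0\to\infty$ forces $s\ge d/u$.

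For part (iii) the diagonal case $p=u$ is the classical scale, and the two equivalences are exactly \cite[Thm.~3.3.2, Cor.~3.3.1]{ST95}, which I would cite. The new point is that $p<u$ rules out every $L_r$-embedding, and here I would use a translation counterexample: fix a nontrivial $C^\infty$ atom $a$ supported in the unit cube (with enough vanishing moments to be a level-$0$ atom) and set $f_N=\sum_{i=1}^N a(\cdot-y_i)$ with $y_i$ on a lattice of large spacing $D=D(N)$. Via Proposition~\ref{atomicdecomposition} $f_N$ corresponds to the level-$0$ sequence $\lambda_{0,y_i}=1$, and estimating $\|\sum_i\chi_{0,y_i}\mid\M\|$ over cubes capturing $k$ bumps gives $\|f_N\mid\MA\|\lesssim\max\bigl(1,N^{1/u}D^{d(1/u-1/p)}\bigr)$, which is $\le C$ once $D$ is chosen large (recall $1/u-1/p<0$); meanwhile $\|f_N\mid L_r\|=N^{1/r}\|a\mid L_r\|\to\infty$, contradicting a continuous embedding.

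The hard part will be the sufficiency in part (ii), namely $\mathcal{N}^{d/u}_{u,p,\infty}\hookrightarrow\bmo$. The difficulty is that the high-frequency tail $\sum_{j>\nu}f^{(j)}$ cannot be controlled on $Q$ by its size alone (the naive bound carries the divergent factor $|Q|^{-1/u}$ already seen in Theorem~\ref{theorem-MB} and Proposition~\ref{prop-Eu12}), so one genuinely needs the cancellation encoded in the mean oscillation. Passing to the square-function characterization of $\bmo$ converts this into the summable estimate displayed above; the only point requiring a small case distinction is whether $p\le2$ or $p>2$, both of which yield geometric decay in $j-\nu$ after the appropriate H\"older interpolation between $L_p$ and $L_\infty$ on $Q$. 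Everything else in the corollary is either a direct citation or a one-line consequence of the theorems established earlier.
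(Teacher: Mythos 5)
Your proposal is correct, and on part (ii) --- the only part the paper proves with real work --- it follows a genuinely different route. The paper settles (ii) by transference: for the $\cE$-case it invokes the coincidence $\MF(\rd)=F^{s,\tau}_{p,q}(\rd)$ with $\tau=\frac1p-\frac1u$ and quotes the known $\bmo$-results for Triebel--Lizorkin-type spaces from \cite{HSYY}; for the $\cN$-case it uses $\mathcal{N}^s_{u,p,\infty}(\rd)=B^{s,\tau}_{p,\infty}(\rd)$ together with the identification $\bmo(\rd)=B^{0,\frac12}_{2,2}(\rd)$, and it obtains necessity for the $\cN$-scale indirectly, by noting that $\MF\hookrightarrow\mathcal{N}^s_{u,p,\infty}$ would otherwise contradict the already established $\cE$-statement. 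You instead argue inside the paper's own toolkit: sufficiency is reduced to $\mathcal{N}^{d/u}_{u,p,\infty}(\rd)\hookrightarrow\bmo(\rd)$, with large cubes handled by Theorem \ref{theorem-MB} (take $r=1$ and note the logarithmic factor is bounded when $|Q|>1$) and small cubes by the characterization $\bmo=F^0_{\infty,2}$ plus your decay estimate; I checked that estimate, and it does hold with rate $2^{-(j-\nu)d\min(p,2)/u}$, by interpolating the band-limited bound $\|f^{(j)}\mid L_\infty\|\lesssim 2^{jd/u}\|f^{(j)}\mid\M(\rd)\|$ from the proof of Lemma \ref{lemma1} against the Morrey bound on $Q$ when $p\le2$, and by H\"older alone when $p>2$. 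Your necessity via a single dilated atom ($\MA$-norm $\lesssim 2^{j_0(s-d/u)}$ by Proposition \ref{atomicdecomposition}, $\bmo$-norm $\sim 1$ by scale invariance of mean oscillation) is also sound, provided the atom is nonconstant and has enough vanishing moments to be admissible for the given $s$. The trade-off: the paper's proof is a few lines but imports nontrivial external results on the $A^{s,\tau}_{p,q}$-scales; yours is self-contained, treats both scales at once (only $\mathcal{N}^{d/u}_{u,p,\infty}$ enters), and handles necessity without detouring through the $\cE$-case, at the cost of invoking the classical but nontrivial identification $\bmo=F^0_{\infty,2}$ with its localized square-function norm. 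Parts (i) and (iii) are pure citations in the paper; your sketched mechanisms --- the Plancherel--P\'olya--Nikol'skij summation for (i), and for (iii) the widely spaced translated atoms with $\|f_N\mid\MA(\rd)\|\lesssim\max\bigl(1,N^{1/u}D^{d(1/u-1/p)}\bigr)$ against $\|f_N\mid L_r(\rd)\|=N^{1/r}\|a\mid L_r(\rd)\|\to\infty$ --- are correct and are essentially the arguments underlying the results cited there.
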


\begin{proof}
  {\em Step 1}.~ Part (i) is well-known, we refer to \cite[Prop.~5.5]{hs12} and \cite[Prop.~3.8]{hs13} for the Morrey situation when $p<u$, while the classical setting $p=u$ can be found in  \cite[Theorem~11.4]{T-func}.
  
  {\em Step 2}.~ We prove (ii). In case of $\MA=\MF$, this follows from the analogous statement for $F^{s,\tau}_{p,q}(\real^d)$ spaces and the coincidence $\MF(\real^d)=F^{s,\tau}_{p,q}(\real^d)$ if $\tau=\frac{1}{p}-\frac{1}{u}$, cf. \cite[Props.~5.13, 5.14]{HSYY} and \eqref{Ft-E}. 
The similar statement for $\MB(\real^d)$ spaces 
\begin{equation}\label{bmo2}
\MB(\real^d)\hookrightarrow \bmo(\real^d)\qquad\text{if, and only if,}\qquad s\geq\frac{d}{u}
\end{equation}
can be proved analogously. Let $\tau=\frac{1}{p}-\frac{1}{u}$. Then, in view of \eqref{Bt-N} and the subsequent remark, 
\[
\mathcal{N}^s_{u,p,\infty}(\real^d) = B^{s,\tau}_{p,\infty}(\real^d)\hookrightarrow \bmo(\real^d)= B^{0,\frac{1}{2}}_{2,2}(\real^d)
\]
if $\frac{1}{p}-\frac{1}{u}- \frac{1}{2}\not= 0$, cf. \cite[Prop.~5.10, Theorem 2.5]{HSYY}. If $\frac{1}{p}-\frac{1}{u}- \frac{1}{2} = 0$,
then we can choose $r$ such that $u<r<p$ and  $\frac{1}{r}-\frac{1}{u}- \frac{1}{2} > 0$. Hence
\[
\mathcal{N}^s_{u,p,\infty}(\real^d) \hookrightarrow \mathcal{N}^s_{u,r,\infty}(\real^d) \hookrightarrow \bmo(\real^d). 
\]
 This proves the sufficiency of the conditions. 
 
 To prove necessity let us take  $s<\frac{d}{u}$.  If $\mathcal{N}^s_{u,p,\infty}(\real^d) \hookrightarrow \bmo(\real^d)$, then $\MF(\real^d)\hookrightarrow \bmo(\real^d)$. This contradicts \eqref{bmo1}.  

 {\em Step 3}.~Part (iii) in case of $p<u$ can be found in \cite{hs12,hs13}, whereas the classical results for $p=u$ are well-known.
\end{proof}

 \begin{remark}
A partial forerunner of (i) can be found in \cite[Prop.~1.11]{Saw2010} dealing with the sufficiency part; see also \cite{s011a}.
In some sense the embeddings into $C(\rd)$ and $\bmo(\rd)$ can be understood as limiting cases of Corollary~\ref{theorem-2-n} when $v\to\infty$, whereas the embedding into $L_r(\rd)$ refers to the situation of $r=v=q$ in Corollary~\ref{theorem-2-n}.

 \end{remark}

\begin{remark}
  Based on arguments on the known properties of Triebel-Lizorkin type spaces one can easily strengthen Remark~\ref{at-into-M}, in particular \eqref{Ft-M} with \eqref{At-M-ass}, as follows. Let $0<p<\infty$, $0<q\leq\infty$, $1\leq r\leq v<\infty$, $s\in\real$, $\tau\geq 0$. Then
  \begin{equation}\label{Ft-M-gen}
  \ft(\rd)\hookrightarrow \cM_{v,r}(\rd)
  \end{equation}
  if, and only if,
  \[
r\leq v(1-p\tau),\quad\text{and}\quad 
  \begin{cases}
s>d(\frac1p-\tau-\frac1v)\geq 0, &\text{or}\\ s=d(\frac1p-\tau-\frac1v)>0, &\text{or}\\ 
s=d(\frac1p-\tau-\frac1v)=0,&\text{and}\quad q\leq 2.
  \end{cases}
  \]
  The case $0\leq \tau<\frac1p$ is covered by Remark~\ref{at-into-M} together with the usual monotonicity  for spaces $\ft(\rd)$ and $\cM_{v,r}(\rd)$, see also the forerunner \cite[Thm.~3.1]{hs13}. Thus it remains to disprove any embedding of type \eqref{Ft-M-gen} whenever $\tau\geq \frac1p$. Assume first $\tau > \frac{1}{p}$ or  $\tau = \frac{1}{p}$ and $q=\infty$. Thus the coincidence $\ft(\rd)=B^{s+d(\tau-\frac{1}{p})}_{\infty,\infty}(\rd)$, cf. \cite{yy02}, together with \eqref{Ft-M-gen} would imply  
\[ B^{s+d(\tau-\frac{1}{p})}_{\infty,\infty}(\rd)=\ft(\rd) \hookrightarrow \cM_{v,r}(\rd) \hookrightarrow\cN^0_{v,r,\infty}(\rd), \]
and hence $v=\infty$ in view of \cite[Thm.~3.3]{hs12}. But this contradicts our general assumption. Otherwise, if $\tau = \frac{1}{p}$ and $q<\infty$, then we may use \cite[Prop.~2.6]{ysy} which states, that $\ft(\rd) \hookrightarrow B^{s+d(\tau-\frac1p)}_{\infty,\infty}(\rd)$. We choose a number $\varrho$ such that $v<\varrho<\infty$, and apply an embedding proved by Marschall, cf. \cite{Mar}, to obtain
\[ B^{s+\frac{d}{\varrho}}_{\varrho,\infty}(\rd)\hookrightarrow F^s_{\infty,q}(\rd)=F^{s,\frac1p}_{p,q}(\rd)\hookrightarrow \cM_{v,r}(\rd) \hookrightarrow\cN^0_{v,r,\infty}(\rd). \]
Again the embedding \eqref{Ft-M-gen} would thus lead to $\varrho\leq v$ in view of \cite[Thm.~3.3]{hs12}, i.e., to a contradiction by our choice of $\varrho$. Here we also used the identification $F^s_{\infty,q}(\rd)=F^{s,\frac1p}_{p,q}(\rd)$, see \cite[Props.~3.4 and 3.5]{s011} and \cite[Rem.~10]{s011a}. In a completely parallel way one can show that an embedding $ \bt(\rd)\hookrightarrow \cM_{v,r}(\rd)$ is never possible when $\tau\geq \frac1p$.

Note that the limiting case $v=\infty$ is covered by \cite[Prop.~5.4]{HSYY}, \cite[Prop.~4.1]{YHMSY}, in view of $\cM_{\infty,r}(\rd)=L_\infty(\rd)$. In particular, if $0<p<\infty$, $0<q\leq\infty$, $s\in\real$ and $\tau> 0$, then
\[
\ft(\rd)\hookrightarrow L_\infty(\rd) \quad\text{if, and only if,}\quad s>d\left(\frac1p-\tau\right).
\]
The result for $\bt(\rd)$ looks alike.
\end{remark}



\bigskip

\noindent Dorothee D. Haroske\\
\noindent  Institute of Mathematics,
Friedrich Schiller University Jena, 07737 Jena, Germany\\
\noindent {\it E-mail}:  \texttt{dorothee.haroske@uni-jena.de}

\bigskip

\noindent Susana D. Moura (corresponding author)\\
\noindent  CMUC, Department of Mathematics, University of Coimbra,  EC Santa Cruz, 3001-501 Coimbra, Portugal\\
\noindent {\it E-mail}:  \texttt{smpsd@mat.uc.pt}

\bigskip

\noindent Leszek Skrzypczak\\
\noindent  Faculty of Mathematics and Computer Science,
Adam Mickiewicz University, Ul. Umultowska 87, 61-614 Pozna\'n,
Poland\\
\noindent {\it E-mail}:  \texttt{lskrzyp@amu.edu.pl}

\end{document}